\documentclass[11pt]{amsart}
\usepackage{amsmath,amssymb,amsfonts}
\usepackage[T1]{fontenc}
\usepackage{needspace}
\usepackage[colorlinks=true,linkcolor=blue,citecolor=blue,urlcolor=blue]{hyperref}

\newtheorem{theorem}{Theorem}[section]
\newtheorem{lemma}[theorem]{Lemma}
\newtheorem{proposition}[theorem]{Proposition}
\newtheorem{corollary}[theorem]{Corollary}

\newcommand{\fg}{\mathfrak g}
\newcommand{\fh}{\mathfrak h}
\newcommand{\bR}{\mathbb R}
\newcommand{\bC}{\mathbb C}
\newcommand{\cH}{\mathcal H}
\newcommand{\Dmat}{\mathcal D}
\newcommand{\tr}{\operatorname{tr}}

\newcommand{\Ann}{\operatorname{Ann}}
\newcommand{\HS}{\mathrm{HS}}

\title[CHERN HSC ON TWO-STEP SOLVMANIFOLDS]{Constant Chern holomorphic sectional curvature on two-step solvmanifolds}
\author{Shanlin Guan}
\address{School of Mathematical Sciences,\newline Chongqing Normal University, Chongqing 401331, China}
\email{20269003@cqnu.edu.cn}

\author{Yuqin Guo}
\address{School of Mathematical Sciences,\newline Chongqing Normal University, Chongqing 401331, China}
\email{2024010510007@stu.cqnu.edu.cn}
\email{1942747285@qq.com}

\date{}
\subjclass[2020]{53C55}
\keywords{Chern connection, holomorphic sectional curvature, Hermitian Lie algebra, solvmanifold, two-step solvable Lie algebra}

\begin{document}
\begin{abstract}
Let $M=\Gamma\backslash G$ be a compact solvmanifold with a left-invariant Hermitian structure whose Lie algebra is two-step solvable. If the Chern holomorphic sectional curvature is constant, then the derived algebra is $J$-invariant and the Chern connection is flat. At the Lie-algebra level, we also prove that constant Chern holomorphic sectional curvature is impossible for a unimodular two-step solvable Hermitian Lie algebra of mixed Freibert--Swann type.
\end{abstract}
\maketitle

\section{Introduction}
Let $(M^n,J,g)$ be a Hermitian manifold with Chern curvature tensor $R$. The Chern holomorphic sectional curvature is
\[
H(X)=\frac{R_{X\bar XX\bar X}}{|X|^4},\qquad 0\ne X\in T^{1,0}M.
\]
For K\"ahler metrics, $H$ determines $R$. For a general Hermitian metric, it determines only the part of $R$ symmetric separately in its holomorphic and antiholomorphic indices. The classical rigidity conjecture asserted that on a compact Hermitian manifold, $H\equiv c$ forces the metric to be K\"ahler if $c\ne0$ and Chern flat if $c=0$. This is true for complex surfaces \cite{balas,balasgauduchon,satosekigawa,apostolov}. In dimension three, Ma--Nie treated normal balanced metrics in the nonpositive range, and Chen--Li removed the normality assumption \cite{manie,chenli}. Qin--Tian settled $c\ne0$ without additional assumptions, while Chen--Zheng settled $c=0$ \cite{qintian,chenzhengthree}. The zero-curvature assertion, however, fails in higher dimensions: Qin--Tian \cite{qintian} constructed compact balanced metrics with $H\equiv0$ but $R\not\equiv0$ in every complex dimension $n\ge7$. They also proved K\"ahler rigidity for $c\le0$ in Fujiki's class $\mathcal C$ and for $c\ne0$ on compact balanced fourfolds.

The conjectured rigidity also holds for locally conformally K\"ahler metrics \cite{chenchennie,huangwan} and Bismut-torsion-parallel metrics \cite{chenzhengbtp,wang,wangzhengfour,wangzhengall}. Related results for Chern--K\"ahler-like, Bismut--K\"ahler-like, and certain pluriclosed metrics appear in \cite{tangkahler,raozheng,brodertang}. Chen--Zheng also proved that for Hermitian connections with parallel torsion, nonzero constant holomorphic sectional curvature forces the torsion to vanish \cite{chenzhengparallel}. In a related direction, Tang \cite{tangreal} recently resolved the Yang--Zheng conjecture on constant real bisectional curvature \cite{yangzheng,zhouzheng}. For a survey see \cite{zheng}; related work on Chern curvature of twistor spaces and orthogonal complex structures on flat tori appears in \cite{davidovgrantcharovmuskarov,khanyangzheng}.

We consider left-invariant Hermitian metrics on compact solvmanifolds $M=\Gamma\backslash G$. Let $\fg=\operatorname{Lie}(G)$ and $\fg'=[\fg,\fg]$. The existence of the lattice implies that $\fg$ is unimodular. The role of Lie groups in flatness questions is already apparent in the theorems of Boothby and Hano \cite{boothby,hano}. Li--Zheng proved rigidity for complex nilmanifolds and studied almost-abelian and related solvable groups \cite{lizhengnil,lizhengalmost}. More directly relevant here is the theorem of Huang--Zheng \cite{huangzheng}: a compact Lie-Hermitian solvmanifold with constant Chern holomorphic sectional curvature is Chern flat when $\fg'$ is $J$-invariant. Even when $\fg'$ is abelian, this invariance need not hold. We prove that for a unimodular two-step solvable Hermitian Lie algebra it follows from the constancy of $H$ itself. Huang--Zheng's theorem then yields Chern flatness on compact quotients.

Set
\[
\fh=\fg'\cap J\fg',\qquad V=\fg'\cap\fh^\perp,\qquad W=(\fg'+J\fg')^\perp.
\]
The Freibert--Swann decomposition \cite{freibertskt,freibertbalanced} is
\begin{equation}
\label{eq:1}
\fg=\fh\oplus(V\oplus JV)\oplus W,\qquad\fg'=\fh\oplus V.
\end{equation}
These summands are mutually orthogonal, except possibly $V$ and $JV$. The cases $\fh=0$, $V=0$ and $W=0$ are called pure types I, II and III, respectively; the type is mixed if all three spaces are nonzero. Throughout, two-step solvable means that $\fg'$ is abelian, including $\fg'=0$.

Our main result is the following.

\begin{theorem}
\label{thm:main}
Let $M=\Gamma\backslash G$ be a compact Lie-Hermitian solvmanifold with two-step solvable Lie algebra. If its left-invariant Hermitian metric has constant Chern holomorphic sectional curvature, then
\[
J\fg'=\fg',\qquad H\equiv0,\qquad R=0.
\]
\end{theorem}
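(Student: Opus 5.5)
The proof reduces to a Lie-algebra statement. Once we know that $J\fg'=\fg'$, the theorem of Huang--Zheng \cite{huangzheng} gives Chern flatness of the compact quotient, so $R=0$, and in particular $H\equiv0$. The substance is therefore to show that a unimodular two-step solvable Hermitian Lie algebra with constant Chern holomorphic sectional curvature $c$ satisfies $V=0$ in the Freibert--Swann decomposition \eqref{eq:1}. Unimodularity is automatic from the existence of the lattice.

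I will work with left-invariant frames and the Chern connection written algebraically. For left-invariant fields, $\nabla_XY$ is determined by the brackets via a Koszul-type formula adapted to $J$. Concretely, $\nabla_X Y$ is the $(1,0)$-projection of $[X,Y]$ combined with adjoint terms, using the fact that $\nabla$ preserves $g$ and $J$ and has vanishing $(1,1)$ torsion part. The curvature is then
\[
R(X,Y)=[\nabla_X,\nabla_Y]-\nabla_{[X,Y]}.
\]

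Because $\fg'$ is abelian, $\operatorname{ad}$ vanishes on $\fg'$. The bracket is therefore encoded by the action of $\fg/\fg'\cong JV\oplus W$ on the abelian ideal $\fg'=\fh\oplus V$, together with the $\fg'$-valued bracket on $JV\oplus W$. Evaluating $H$ on vectors $X=v-iJv$ with $v\in V$, and on mixed vectors $X=x+ty$ with $x,y$ from different summands, turns $H\equiv c$ into polynomial identities in $t$. Their coefficients give algebraic constraints on the structure maps.

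The strategy is to treat the pure types first and then the mixed type, which is the second assertion in the abstract.

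\textbf{Pure type I} ($\fh=0$, so $\fg'\cap J\fg'=0$). Test $H$ on $(1,0)$-vectors built from $v\in V$. Since $[JV,V]\subset V$ while $J V\cap\fg'=0$, the holomorphic sectional curvature along $V$ should come out as
\[
-\sum|\text{components of }\operatorname{ad}_{Jv}v|^2 \quad\text{plus trace terms.}
\]
The trace terms vanish by unimodularity. Comparing with $H$ evaluated on $W$-directions (or on $J V$-directions) should force $\operatorname{ad}_{JV}|_V=0$. That makes $V$ central-like and contradicts $V\subset[\fg,\fg]$ unless $V=0$.

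\textbf{Type III} ($W=0$) and the \textbf{mixed case}. Here I expect the same mechanism, with additional cross terms between $\fh$ and $V$. The constant-$H$ condition polarizes to the symmetrized curvature identity
\[
R_{X\bar YZ\bar W}+\text{(symmetrizations)}=\tfrac{c}{2}\bigl(g_{X\bar Y}g_{Z\bar W}+g_{X\bar W}g_{Z\bar Y}\bigr).
\]
I would apply this identity with the four arguments distributed among $\fh$, $V$, $W$ and take traces. The trace over $V$-directions should produce a sum of squares of the components of $[JV,V]$ and $[W,V]$ projected to $V$, which must equal a multiple of $c$. A second trace over $\fh$-directions, using unimodularity, should identify $c$ with a nonpositive sum of squares. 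Together these give $c=0$ and then force the brackets landing in $V$ to vanish, so $V=0$.

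The main obstacle is the mixed case. There the non-orthogonality of $V$ and $JV$ and the coupling terms $[W,V]\cap\fh$ prevent a clean separation of the sums of squares. I would first try choosing an adapted unitary basis in which $J$ maps $V$ into $JV$ by a triangular matrix. Then I would isolate the coefficient of the highest power of $t$ in $H(v+t h)$, for $v\in V$ and $h\in\fh$, to kill the coupling terms before taking traces.
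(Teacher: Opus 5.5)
\textbf{There is a genuine gap.} Your reduction is the paper's: the lattice gives unimodularity, the abelian case is trivial, and once $V=0$ Huang--Zheng gives $R=0$ and $H\equiv0$. The gap is in the case analysis, which is the entire content, and what is missing is an idea rather than details.

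The brackets $[JV,V]\to V$ define a commutative associative algebra $A$ on $V$. In pure type I this is $x\circ y=[Jx,y]$ on $\fg'$, because integrability together with $\fg'\cap J\fg'=0$ forces $[JX,JY]=[X,Y]$ and $[JX,Y]=-[X,JY]$. In general it comes from $D_{\gamma\alpha}^\beta=i a_{\alpha\beta}^\gamma$ in an admissible frame. Every case then turns on the dichotomy of Lemma~\ref{lem:algebra}: either $A$ is nilpotent, or some $L_y$ is a nonzero idempotent, so that $\tr(L_y^2)=\operatorname{rank}L_y>0$.

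Your expectation that a curvature trace yields a sum of squares of the $V$-components of $[JV,V]$ does not materialize. The identity obtained by tracing over $\fh$-directions is \eqref{eq:40} (\eqref{eq:29} in type III). There unimodularity is used to convert a bracket-coefficient term into $\tr L_{x\circ x}$, and the $[JV,V]$-contribution $\tr(L_x^2)$ is indefinite (negative when, e.g., $L_x$ has eigenvalues $\pm i$). Positivity alone therefore never forces these brackets to vanish.

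In type III, the contradiction is that $\fg'+J\fg'=\fg$ makes the quotient algebra satisfy $A^2=A\ne0$, which is incompatible with $\tr(L_x^2)\le0$. Even obtaining $c=0$ there requires $\fh\ne0$, which again comes from the algebra lemma: if $\fh=0$, unimodularity gives $\tr L_x=0$, contradicting $A^2=A$. In that case your plan has no $\fh$-directions to trace over.

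In the mixed case, $\tr(L_x^2)\le0$ only gives nilpotence (and $F_x=0$, $Q_x+\Xi_x=0$). You still need three further steps:
\begin{enumerate}
\item $A^2=0$. The paper takes the $t^2$-coefficient of $R_{x\bar x x\bar x}$ at $x=y+tz$ with $z\in\Ann(A)$, not at $v+th$ with $h\in\fh$.
\item The skew-symmetry of $\cH_0E_a$.
\item A trace of $\widehat R_{a\bar a k\bar k}$ to obtain $E_a=0$.
\end{enumerate}
Only then does it follow that no bracket has a $V$-component.

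Your pure type I sketch breaks at two specific points. First, $H$ at a single vector is not ``$-\sum|\cdot|^2$ plus trace terms''. By \eqref{eq:3}, $R_{1\bar 1 1\bar 1}=\sum_s|D_{11}^s|^2-\sum_s|D_{s1}^1|^2-2\operatorname{Re}\sum_sD_{s1}^1\overline{D_{1s}^1}$. Unimodularity, being the trace condition \eqref{eq:8}, removes nothing from a single value $H(X)$; it enters only through $\tr L_x=0$, which makes $(\fg',\circ)$ nilpotent. Second, even $\operatorname{ad}_{JV}|_V=0$ would not contradict $V\subset[\fg,\fg]$ when $W\ne0$, since $V$ could be spanned by brackets with $W$.

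The paper argues instead as follows. First, $W\ne0$, since otherwise $\fg'=[J\fg',\fg']=A^2$, which is impossible for a nonzero nilpotent $A$. Then $W$-directions give $c=R_{a\bar a a\bar a}\le0$. If $c<0$, any $0\ne x$ in the last nonzero power of $A$ satisfies $[Jx,x]=0$, so $H(x-iJx)\ge0$ by Lemma~\ref{lem:positive}. If $c=0$, the identities \eqref{eq:16}--\eqref{eq:18} force $W$ to be central, so again $\fg'=A^2$, contradicting nilpotence.
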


It remains to exclude $V\ne0$. The mixed case admits the following Lie-algebraic statement, without a lattice assumption.
\begin{theorem}
\label{thm:mixed}
Let $(\fg,J,g)$ be a unimodular two-step solvable Hermitian Lie algebra. If $\fh$, $V$ and $W$ in \eqref{eq:1} are all nonzero, then its Chern holomorphic sectional curvature cannot be constant.
\end{theorem}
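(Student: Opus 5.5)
We argue by contradiction: assume $H\equiv c$ with $\fh,V,W$ all nonzero, and evaluate $H$ on well-chosen $(1,0)$-vectors built from the three summands of \eqref{eq:1}. The idea is to extract algebraic identities on the structure constants that together force $V=0$.

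\textbf{Step 1: reduce to linear algebra on $\fg'$.} Since $\fg'$ is abelian and an ideal, the whole bracket is encoded by the representation $\operatorname{ad}:\fg\to\operatorname{End}(\fg')$, which factors through $\fg/\fg'\cong JV\oplus W$. So we record operators $A_X=\operatorname{ad}_X|_{\fg'}$ for $X\in JV\oplus W$, together with the $\fg'$-valued 2-cocycle $[X,Y]$ for $X,Y\in JV\oplus W$. Unimodularity gives $\tr A_X=0$. Relative to an adapted unitary frame, with $\fh$ complex and $V\oplus JV$ spanned by $e_a, Je_a$, we then write the Chern connection via $\nabla_XY=$ (Levi-Civita part) $+$ torsion correction. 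Explicitly, in terms of the Lie-algebra Chern connection formula $g(\nabla_xy,z)$, $\nabla$ is expressed through $[\cdot,\cdot]$ and $J$. Then $R_{X\bar XX\bar X}=g(\nabla_{[X,\bar X]}X-[\nabla_X,\nabla_{\bar X}]X,\bar X)$ becomes a quartic-in-$X$, quadratic-in-structure-constants expression.

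\textbf{Step 2: test vectors.} We use three families of test vectors.
\begin{enumerate}
\item $X\in\fh^{1,0}$. Here $\fh$ is abelian and $J$-invariant, so $\nabla_X$ and $\nabla_{\bar X}$ involve only the adjoint actions of $JV\oplus W$ through their adjoints. This should give $c|X|^4=-\sum|{\rm stuff}|^2$, so $c\le0$.
\item $X=w-iJw$ with $w\in W$. This gives $c$ in terms of norms of $A_w,A_{Jw}$ and of $[w,Jw]$.
\item $X=v-iJv$ with $v\in V$. This mixes derived and non-derived directions, and its curvature includes positive terms coming from $\operatorname{ad}_{Jv}$ acting on $v$.
\end{enumerate}
Beyond these, we polarize: constancy of $H$ means the symmetrized tensor $\widehat R$ equals $\frac c2(g\odot g)$. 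We then expand $H$ on combinations $X+tY$ with $X\in\fh^{1,0}$, $Y$ in the other parts, and compare coefficients of $t,\bar t,t\bar t$, etc. The mixed coefficients yield linear identities, for instance $g(A_w h,h')$ being skew-Hermitian-type for $w\in W$, $h,h'\in\fh$. This forces $A_w|_\fh$ and $A_{Jv}|_\fh$ to be normal and to commute with $J$ in the appropriate sense.

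\textbf{Step 3: trace argument.} Summing the identity from $X\in\fh^{1,0}$ over a unitary basis, and the ones from $V$ and $W$ likewise, produces three averaged equations for $c$. We combine them with $\tr A_X=0$, which kills diagonal contributions, so that each equation becomes a sum of squared norms. The mixed condition is essential here: the presence of $\fh$ yields the inequality $c\le 0$ via item~1, while the presence of $W$ forces the $W$-average to have a sign, and together these force $c=0$ and the vanishing of all squared terms. In particular the projections of $\operatorname{ad}_{JV}$ that produce $V$ must vanish. But $V\subset\fg'$ must be hit by brackets, so some bracket lands nontrivially in $V$, which gives the contradiction.

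The main obstacle is Step 3. The $V$-directions contribute cross terms between $V$ and $JV$, which are not orthogonal, and these terms have no obvious sign. My first attempt would be to choose $V$-test vectors orthonormalized within $V\oplus JV$ and use polarization with $\fh$ and $W$ vectors to eliminate the indefinite cross terms before averaging.
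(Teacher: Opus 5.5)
Your plan has a genuine gap at exactly the point you flag as the main obstacle, and the remedy you suggest does not address it.

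\textbf{What Step 2 actually gives.} Step 2 does yield $c=0$, because the $\fh^{1,0}$-directions and the $W^{1,0}$-directions give curvatures of opposite sign. In the paper's convention, $H\ge0$ on $\fh^{1,0}$ (there $[X,\bar X]=0$), and $H\le0$ on $W^{1,0}$ (all $W$-rows $D_{a\,q}^{\,p}$ vanish). Your displayed formula for $R_{X\bar XX\bar X}$ carries the opposite overall sign, which is harmless. However, this step only produces $D_{pa}^b=0$ and skew-symmetry of $(D_{ij}^a)$.

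\textbf{The gap in Step 3.} You assert that, after using $\tr A_X=0$, each averaged identity becomes a sum of squared norms. This is false.
\begin{itemize}
\item The $V\oplus JV$ block of the structure constants, $D_{\gamma\alpha}^\beta=i\,a_{\alpha\beta}^\gamma$ with $a$ real and symmetric, makes $V$ a commutative associative algebra $(A,\circ)$ with multiplication operators $L_x$.
\item Averaging over $\fh$ then gives $4\sum_i\widehat R_{x\bar xi\bar i}=\tr(L_x^2)+\tr(U_x^*G^{-T}U_x)+\|Q_x+\Xi_x\|_{\HS}^2$.
\item In the mixed case unimodularity does not give $\tr L_x=0$. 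It gives $\tr L_\alpha=i(p_\alpha-\bar p_\alpha)$, where $p_\alpha$ is the trace of the $\fh$-block of $\Dmat_\alpha$. This relation is exactly what turns the bracket-coefficient terms into $\tr L_{x\circ x}=\tr(L_x^2)$. So unimodularity creates the bad term rather than removing it.
\item The term is intrinsically indefinite: a real $L_x$ can have eigenvalues $\pm i$. It is not caused by the non-orthogonality of $V$ and $JV$, so no re-orthonormalization and no polarization against $\fh$ and $W$ vectors will remove it.
\end{itemize}
The missing idea is algebraic. A non-nilpotent finite-dimensional commutative associative real algebra contains some $y$ with $L_y^2=L_y\ne0$, so $\tr(L_y^2)=\operatorname{rank}L_y>0$. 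Hence $\tr(L_x^2)\le0$ forces nilpotency. Only then do the two nonnegative terms vanish, giving $F_x=0$ and $Q_x+\Xi_x=0$.

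\textbf{The final step also falls short.} Even granting the above, your last step does not reach a contradiction. The $V$-components of brackets are governed by all coefficients $D_{\alpha k}^j$ with a middle first lower index (the $C_{pq}^\alpha$ already vanish). So besides $L_x$ you must also kill the $W$-coupled blocks $K_a=(D_{\alpha i}^a)$ and $S_a=(D_{\alpha\beta}^a)$. Your sketch only addresses $\operatorname{ad}_{JV}$, but $\operatorname{ad}_W$ and brackets inside $JV\oplus W$ can also land in $V$. The paper needs two further curvature arguments here:
\begin{itemize}
\item It evaluates $R_{x\bar xx\bar x}=0$ on middle vectors $x=y+tz$ with $z\in\Ann(A)$. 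This gives $u^TGz=0$ for all $u\in A^2$, hence $A^2=0$ and so $L_x=0$.
\item It shows that $\cH_0E_a$ is skew-symmetric, where $E_a=(D_{pk}^a)_{p,k\le r+q}$, and combines this with $\sum_k\widehat R_{a\bar ak\bar k}=0$ to force $E_a=0$.
\end{itemize}
Neither ingredient appears in your plan.
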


The proof separates the three cases with $V\ne0$. In pure type I, unimodularity yields a nilpotent commutative associative algebra, which the curvature condition excludes in the nonabelian case. In pure type III, the corresponding quotient algebra satisfies $A^2=A$; a curvature trace identity gives a contradiction. In the mixed case, the same trace argument yields nilpotence, and further curvature identities eliminate $V$.

\section{Preliminaries}
Write $\fg^{1,0}$ for the $i$-eigenspace of $J$ in $\fg\otimes\bC$, and use a bar for complex conjugation. For matrices, $M^*=\bar M^T$ and $M^{-T}=(M^T)^{-1}$. We follow the structure-constant conventions of \cite{chenzhengstreets,huangzheng}. If $e=(e_1,\ldots,e_n)$ is a frame of $\fg^{1,0}$ with dual coframe $\varphi=(\varphi_1,\ldots,\varphi_n)$, then
\begin{equation}
\label{eq:2}
[e_i,e_j]=\sum_k C_{ij}^k e_k,\qquad
[e_i,\bar e_j]=\sum_k\bigl(\overline{D_{kj}^i}e_k-D_{ki}^j\bar e_k\bigr).
\end{equation}
For a unitary frame, the Chern curvature is \cite[Lemma 2]{huangzheng}
\begin{equation}
\label{eq:3}
R_{i\bar j k\bar\ell}=\sum_s\bigl(D_{ki}^s\overline{D_{\ell j}^s}-D_{si}^\ell\overline{D_{sj}^k}-D_{si}^j\overline{D_{\ell s}^k}-\overline{D_{sj}^i}D_{ks}^\ell\bigr).
\end{equation}
Only the part of $R$ symmetric in its two holomorphic and its two antiholomorphic entries enters $H$. Put
\begin{equation}
\label{eq:4}
\widehat R_{i\bar j k\bar\ell}=\frac14\bigl(R_{i\bar j k\bar\ell}+R_{k\bar j i\bar\ell}+R_{i\bar\ell k\bar j}+R_{k\bar\ell i\bar j}\bigr).
\end{equation}
Then $H\equiv c$ is equivalent to \cite[Lemma 1]{huangzheng}
\begin{equation}
\label{eq:5}
\widehat R_{i\bar j k\bar\ell}=\frac c2\bigl(g_{i\bar j}g_{k\bar\ell}+g_{i\bar\ell}g_{k\bar j}\bigr).
\end{equation}

\Needspace{9\baselineskip}
\begin{lemma}
\label{lem:positive}
Let $(\fg,J,g)$ be a Hermitian Lie algebra. If $0\ne x\in\fg$ satisfies $[x,Jx]=0$, then $H(x-iJx)\ge0$.
\end{lemma}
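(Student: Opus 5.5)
The plan is to compute $H$ directly from the structure-constant formula \eqref{eq:3} in a unitary frame adapted to $x$, and to show that the hypothesis $[x,Jx]=0$ kills every term of $R_{1\bar11\bar1}$ that could be negative.

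First I normalize. Set $X=x-iJx\in\fg^{1,0}$. Since $H$ is invariant under rescaling, I may assume $|X|=1$. I then complete $e_1=X$ to a unitary frame $e_1,\dots,e_n$ of $\fg^{1,0}$. Because the metric, the complex structure and $X$ are all left-invariant, $H(X)$ is the constant $R_{1\bar11\bar1}$, computed by \eqref{eq:3}.

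Next I translate the hypothesis into structure constants. We have
\[
[X,\bar X]=[x-iJx,\,x+iJx]=2i[x,Jx]=0,
\]
so $[e_1,\bar e_1]=0$. By \eqref{eq:2},
\[
[e_1,\bar e_1]=\sum_k\bigl(\overline{D_{k1}^1}e_k-D_{k1}^1\bar e_k\bigr).
\]
Since the $e_k$ and $\bar e_k$ are linearly independent, this gives $D_{k1}^1=0$ for every $k$.

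Finally I substitute $i=j=k=\ell=1$ into \eqref{eq:3}:
\[
R_{1\bar11\bar1}=\sum_s\bigl(|D_{11}^s|^2-D_{s1}^1\overline{D_{s1}^1}-D_{s1}^1\overline{D_{1s}^1}-\overline{D_{s1}^1}D_{1s}^1\bigr).
\]
Each of the last three terms contains a factor $D_{s1}^1$ or its conjugate, so all three vanish. Hence
\[
H(X)=R_{1\bar11\bar1}=\sum_s|D_{11}^s|^2\ge0.
\]
The only step needing care is the index bookkeeping in \eqref{eq:2}: one must confirm that the vanishing constants are exactly the $D_{s1}^1$ appearing in the three negative terms, and not $D_{1s}^1$. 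I will check this by reading off the coefficient of $\bar e_k$ in \eqref{eq:2} with $i=j=1$, which is $-D_{k1}^1$. Each of the three negative terms does contain $D_{s1}^1$ or $\overline{D_{s1}^1}$, so the argument goes through.
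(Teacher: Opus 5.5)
Your proposal is correct and is the paper's own argument: choose a unitary frame with $e_1$ proportional to $x-iJx$, read $D_{s1}^1=0$ off \eqref{eq:2} from $[e_1,\bar e_1]=2i[x,Jx]/|X|^2=0$, and substitute into \eqref{eq:3} to get $R_{1\bar11\bar1}=\sum_s|D_{11}^s|^2\ge0$. Your index check is also right: each of the three negative terms contains $D_{s1}^1$ or its conjugate.
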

\begin{proof}
Take a unitary frame with $e_1=(x-iJx)/(\sqrt2\,|x|)$. Since $[e_1,\bar e_1]=0$, \eqref{eq:2} gives $D_{s1}^1=0$ for every $s$. Formula \eqref{eq:3} yields
\[
H(e_1)=R_{1\bar 1 1\bar 1}=\sum_s|D_{11}^s|^2\ge0.\qedhere
\]
\end{proof}

The Jacobi identity also gives the following relations \cite[(5)]{chenzhengstreets}:
\begin{equation}
\label{eq:6}
\sum_r\bigl(C_{ik}^rD_{jr}^\ell+D_{ji}^r D_{rk}^\ell-D_{jk}^r D_{ri}^\ell\bigr)=0
\end{equation}
and
\begin{equation}
\label{eq:7}
\sum_r\bigl(C_{ik}^r\overline{D_{r\ell}^j}-C_{rk}^j\overline{D_{r\ell}^i}+C_{ri}^j\overline{D_{r\ell}^k}-D_{ri}^\ell\overline{D_{jr}^k}+D_{rk}^\ell\overline{D_{jr}^i}\bigr)=0.
\end{equation}
The unimodularity condition, which holds whenever $G$ has a compact quotient, is
\begin{equation}
\label{eq:8}
\sum_s(C_{si}^s+D_{si}^s)=0\qquad\text{for every }i.
\end{equation}

For a possibly nonunitary frame, let
\[
\cH=(g_{i\bar j}),\qquad \Dmat_k=(D_{ik}^j)_{i,j}.
\]
Denote by $A_k$ and $B_k$ the matrices of $\nabla_{e_k}$ and $\nabla_{\bar e_k}$ on $\fg^{1,0}$. The connection formula of \cite[Lemma 1]{chenzhengstreets} gives
\begin{equation}
\label{eq:9}
A_k=\cH^{-T}\Dmat_k^T\cH^T,\qquad B_k=-\overline{\Dmat_k},
\end{equation}
and therefore
\begin{equation}
\label{eq:10}
R_{k\bar\ell}=A_kB_\ell-B_\ell A_k-\sum_p\overline{D_{p\ell}^k}A_p+\sum_pD_{pk}^\ell B_p.
\end{equation}
The transposes in \eqref{eq:9} matter when the frame is not unitary.

For a finite-dimensional real commutative associative algebra $A$, write $L_x(y)=x\circ y$ and set $A^1=A$, $A^{m+1}=A\circ A^m$.
\begin{lemma}
\label{lem:algebra}
Let $A$ be a finite-dimensional real commutative associative algebra.
\begin{enumerate}
\item \label{lem:algebra-i}If $\tr L_x=0$ for all $x\in A$, then $A$ is nilpotent.
\item \label{lem:algebra-ii}If $A$ is not nilpotent, there exists $y\in A$ with $L_y^2=L_y\ne0$. In particular, this holds if $A^2=A\ne0$.
\end{enumerate}
\end{lemma}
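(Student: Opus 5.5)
The plan is to work with the left regular representation $L\colon A\to\operatorname{End}(A)$, $x\mapsto L_x$. Associativity gives $L_{x\circ y}=L_xL_y$, and commutativity makes all $L_x$ commute pairwise. So $L(A)$ is a commutative associative subalgebra of $\operatorname{End}(A)$, and for every polynomial $p$ without constant term we have $L_{p(x)}=p(L_x)$. Here $p(x)$ makes sense in $A$ even if $A$ has no unit, because $p$ has no constant term. Both parts come from Jordan/Fitting theory of the commuting family $\{L_x\}$.

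For \eqref{lem:algebra-i}, fix $x\in A$. Then $\tr (L_x)^m=\tr L_{x^m}=0$ for all $m\ge1$, where $x^m=x\circ\cdots\circ x$. Over $\bC$, the power sums of the eigenvalues of $L_x$ all vanish. By Newton's identities in characteristic zero, all elementary symmetric functions vanish, so every eigenvalue is $0$ and $L_x$ is nilpotent. A commuting family of nilpotent operators can be simultaneously strictly upper-triangularized (over $\bC$, hence the conclusion holds over $\bR$). Consequently $L_{x_1}\cdots L_{x_N}=0$ for $N=\dim A$ and all $x_1,\dots,x_N\in A$. This says $A^{N+1}=0$, so $A$ is nilpotent.

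For \eqref{lem:algebra-ii}, suppose $A$ is not nilpotent. The argument in \eqref{lem:algebra-i} shows that if every $L_x$ were nilpotent, $A$ would be nilpotent. Hence some $L_x$ is not nilpotent, i.e. it has a nonzero (complex) eigenvalue. Consider the Fitting decomposition $A=A_0\oplus A_1$ of $L_x$, where $A_0$ is the generalized $0$-eigenspace and $A_1\ne0$ is the sum of the generalized eigenspaces for nonzero eigenvalues. Let $P$ be the projection onto $A_1$ along $A_0$. By the Chinese remainder theorem there is a real polynomial $q$ with the following properties:
\begin{itemize}
\item $q\equiv 0 \bmod t^{N}$;
\item $q\equiv 1$ modulo the factor $\chi_1$ of the characteristic polynomial coming from nonzero eigenvalues;
\item $q(L_x)=P$.
\end{itemize}
The real factorization works because $\chi_1$ and $t^N$ are coprime real polynomials. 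Since $t$ divides $q$, $q$ has no constant term, so $y:=q(x)\in A$ is defined. Then $L_y=q(L_x)=P$, which satisfies $P^2=P\neq0$. Finally, if $A^2=A\ne0$, then $A^m=A\neq0$ for all $m$, so $A$ is not nilpotent and the previous argument applies.

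The only delicate point is the construction of the idempotent in the possibly non-unital setting. This is handled by choosing $q$ divisible by $t$, so that the projection is realized as $L_y$ for a genuine element $y\in A$. Everything else is standard linear algebra.
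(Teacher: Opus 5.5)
Your proof is correct. For the second part you follow the paper. From a non-nilpotent $L_x$, the Chinese remainder theorem gives a real polynomial $q$ divisible by $t$ and $\equiv 1$ modulo the factor for the nonzero eigenvalues. Then $q(L_x)$ is the Fitting projection, and it is realized as $L_y$ with $y=q(x)\in A$. The paper says the same thing using the minimal polynomial $t^kq(t)$ in place of your characteristic-polynomial factor $\chi_1$. One small presentational point: you list $q(L_x)=P$ as if it were an extra condition on $q$, but it follows from the other two. $L_x^N$ kills $A_0$, and $\chi_1(L_x)$ kills $A_1$ by Cayley--Hamilton applied to the restriction.

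Where you differ is the first part. You show each $L_x$ is nilpotent directly: $\tr L_x^m=\tr L_{x^m}=0$ for all $m\ge1$, and Newton's identities then force all eigenvalues to vanish. The paper instead gets the first part as a by-product of the idempotent construction. If some $L_x$ were not nilpotent, the resulting $y$ would satisfy $\tr L_y=\operatorname{rank}L_y>0$, contradicting the trace hypothesis.

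The paper's route is more economical: one construction serves both parts, and the trace hypothesis is used at a single element. Your route keeps the first part independent of the idempotent and makes the mechanism, vanishing power sums of eigenvalues, explicit. Both use characteristic zero in the same essential way. Both finish identically, by simultaneously strictly triangularizing the commuting nilpotent family $\{L_x\}$ to obtain $A^{N+1}=0$.
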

\begin{proof}
Suppose that some $L_x$ is not nilpotent. Write its minimal polynomial as $m_x(t)=t^kq(t)$ with $q(0)\ne0$. As $t^{\max\{k,1\}}$ and $q$ are coprime, there exists a real polynomial $p$ with $p\equiv0\pmod{t^{\max\{k,1\}}}$ and $p\equiv1\pmod q$. Thus $p(0)=0$ and $p(L_x)$ is a nonzero idempotent. Associativity implies $p(L_x)=L_y$ for some $y\in A$, so
\[
\tr L_y=\tr p(L_x)=\operatorname{rank}p(L_x)>0.
\]
If, instead, every $L_x$ is nilpotent, the commuting family $\{L_x:x\in A\}$ can be simultaneously strictly upper triangularized over $\bC$. Hence, with $d=\dim A$, any product of $d$ such operators vanishes and $A^{d+1}=0$. The trace hypothesis in (\ref{lem:algebra-i}) excludes the nonzero idempotent above, proving nilpotence, and nonnilpotence gives (\ref{lem:algebra-ii}). Finally, $A^2=A\ne0$ implies $A^m=A$ for every $m\ge1$.
\end{proof}

\section{Pure type I}
Suppose that $\fg'\cap J\fg'=0$, and put $A=\fg'$.
\begin{proposition}
\label{prop:pureI-algebra}
Let $\fg$ be a unimodular two-step solvable Lie algebra with complex structure $J$ and $\fg'\cap J\fg'=0$. Define
\begin{equation}
\label{eq:11}
x\circ y=[Jx,y],\qquad x,y\in A=\fg'.
\end{equation}
Then $(A,\circ)$ is a nilpotent commutative associative algebra.
\end{proposition}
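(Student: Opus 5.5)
The plan is to check, in order, that \eqref{eq:11} defines a product on $A$, that it is commutative, that it is associative, and that every multiplication operator is traceless, and then to apply Lemma~\ref{lem:algebra}(\ref{lem:algebra-i}). Since $[\fg,\fg]=\fg'$, we have $[Jx,y]\in\fg'=A$, so $\circ$ is a bilinear product on $A$. All of the structure comes from three inputs: the integrability of $J$, the abelianness of $\fg'$, and the hypothesis $\fg'\cap J\fg'=0$.

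\emph{Commutativity.} For $x,y\in A$ the Nijenhuis tensor vanishes, so
\[
[Jx,Jy]-[x,y]-J\bigl([Jx,y]+[x,Jy]\bigr)=0 .
\]
Here $[x,y]=0$ because $\fg'$ is abelian, and $[Jx,Jy]\in\fg'$. Hence $J\bigl([Jx,y]+[x,Jy]\bigr)\in\fg'$. The element $u=[Jx,y]+[x,Jy]$ itself lies in $\fg'$, and $Ju\in\fg'$ gives $u\in J^{-1}\fg'=J\fg'$. Thus $u\in\fg'\cap J\fg'=0$. This yields $[Jx,y]=[Jy,x]$, i.e.\ $x\circ y=y\circ x$, and also $[Jx,Jy]=J u=0$.

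\emph{Associativity.} Apply the Jacobi identity to $Jx$, $Jy$ and $z\in A$:
\[
[Jx,[Jy,z]]-[Jy,[Jx,z]]=[[Jx,Jy],z]=0 .
\]
This says $x\circ(y\circ z)=y\circ(x\circ z)$. Combining it with commutativity gives
\[
(x\circ y)\circ z=z\circ(x\circ y)=x\circ(z\circ y)=x\circ(y\circ z).
\]
So $(A,\circ)$ is commutative and associative.

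\emph{Nilpotence.} We have $L_x=\operatorname{ad}(Jx)|_{\fg'}$. Unimodularity gives $\tr\operatorname{ad}(Jx)=0$ on $\fg$. Since $\operatorname{ad}(Jx)$ maps $\fg$ into $\fg'$, the map it induces on $\fg/\fg'$ is zero. Therefore
\[
\tr L_x=\tr\operatorname{ad}(Jx)|_{\fg'}=\tr\operatorname{ad}(Jx)=0 .
\]
Lemma~\ref{lem:algebra}(\ref{lem:algebra-i}) then shows that $A$ is nilpotent.

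The only step that is not routine is extracting $[Jx,Jy]=0$ and commutativity from integrability. That step is where $\fg'\cap J\fg'=0$ is used essentially; everything after it is formal.
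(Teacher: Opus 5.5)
Your proof is correct and follows the paper's argument. Integrability together with $\fg'\cap J\fg'=0$ gives $[Jx,Jy]=[x,y]=0$ and $[Jx,y]=-[x,Jy]$; Jacobi then makes the multiplication operators commute, which yields associativity, and unimodularity gives $\tr L_x=0$, so Lemma~\ref{lem:algebra} applies. The only differences are cosmetic: the paper derives these identities for all $X,Y\in\fg$ and computes $\tr\operatorname{ad}_{Jx}$ through a $J$-invariant complement $\fg=A\oplus JA\oplus W$, while you restrict to $x,y\in A$ and use the zero induced map on $\fg/\fg'$, which is slightly more economical.
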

\begin{proof}
The integrability of $J$ reads
\[
[X,Y]-[JX,JY]+J[JX,Y]+J[X,JY]=0.
\]
The first two terms lie in $A$, while the others lie in $JA$. Since $A\cap JA=0$,
\begin{equation}
\label{eq:12}
[JX,JY]=[X,Y],\qquad [JX,Y]=-[X,JY].
\end{equation}
In particular, $J$ is abelian and $\circ$ is commutative. Because $A$ is abelian, Jacobi gives $[L_x,L_y]z=[[Jx,Jy],z]=0$ for $x,y,z\in A$. Therefore
\[
(x\circ y)\circ z=z\circ(x\circ y)=L_zL_xy=L_xL_zy=x\circ(y\circ z).
\]
Choose a $J$-invariant complement $W$ with $\fg=A\oplus JA\oplus W$. The map $\operatorname{ad}_{Jx}$ restricts to $L_x$ on $A$, vanishes on $JA$, and maps $W$ into $A$. Thus $\tr(\operatorname{ad}_{Jx})=\tr L_x=0$ by unimodularity. Lemma~\ref{lem:algebra} proves that $A$ is nilpotent.
\end{proof}

\begin{theorem}
\label{thm:pureI}
A nonabelian unimodular two-step solvable Hermitian Lie algebra of pure type I cannot have constant Chern holomorphic sectional curvature.
\end{theorem}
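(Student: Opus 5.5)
The plan is to combine Proposition~\ref{prop:pureI-algebra} with Lemma~\ref{lem:positive} to get $c\ge0$, then to use a trace identity coming from \eqref{eq:5} to force $c\le0$, and finally to show that $c=0$ forces $\fg'=0$.

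Since $\fg$ is nonabelian, $A=\fg'\ne0$, and by Proposition~\ref{prop:pureI-algebra} the algebra $(A,\circ)$ is nilpotent. Identity \eqref{eq:12} holds for all $X,Y\in\fg$, so $J$ is abelian. Hence in any frame of $\fg^{1,0}$ we have $[e_i,e_j]=0$, i.e.\ $C\equiv0$. This simplifies \eqref{eq:6}, \eqref{eq:7} and \eqref{eq:8} to relations purely among the $D$'s; in particular unimodularity becomes $\sum_s D_{si}^s=0$.

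\emph{Step 1: $c\ge0$.} Let $m$ be maximal with $A^m\ne0$, and pick $0\ne x\in A^m$. Nilpotence gives $x\circ A\subset A^{m+1}=0$, so $[Jx,A]=0$. In particular $[x,Jx]=-[Jx,x]=-x\circ x=0$. Lemma~\ref{lem:positive} then gives $c=H(x-iJx)\ge0$. I will also record what $[Jx,A]=0$ means for the $D$'s when $e_1=(x-iJx)/(\sqrt2|x|)$ is placed in a unitary frame adapted to $A\oplus JA\oplus W$. Since $x$ is central in $A$ and $\operatorname{ad}_{Jx}$ kills $A\oplus JA$, many entries $D_{1s}^{t}$ and $D_{s1}^t$ vanish.

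\emph{Step 2: $c\le0$ via a trace identity.} Contract \eqref{eq:5} over a unitary frame, using $\sum_{i,k}\widehat R_{i\bar i k\bar k}=\tfrac c2\,n(n+1)$, and expand the four terms of \eqref{eq:4} using \eqref{eq:3} with $C=0$. The "Ricci-type" terms $\sum_{i,s}D_{si}^{i}\cdots$ vanish by unimodularity. The remaining terms should combine, via \eqref{eq:6} with $C=0$ (which says the matrices $\Dmat_k$ satisfy commutation relations making $\{\Dmat_k\}$ essentially a commuting family), into $-\sum\|[\Dmat_k,\Dmat_\ell^*]\|^2$-type nonnegative quantities, or into $-\sum_k\|\Dmat_k\|^2$ corrected by traces $\tr(\Dmat_k\Dmat_\ell)$.

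In the pure type I setting, the $\Dmat_k$ are built from the nilpotent operators $L_x$ (and the $W$-to-$A$ parts of $\operatorname{ad}_{Jx}$). Nilpotence should make the trace corrections $\tr(\Dmat_k\Dmat_\ell)$ vanish, since products of commuting nilpotents have zero trace. The identity then reads $c\cdot(\text{positive})=-(\text{sum of squares})$. This gives $c\le0$, hence $c=0$, and also the vanishing of every term in the sum of squares.

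\emph{Step 3: $c=0$ forces $A=0$.} Vanishing of the sum of squares should say that every $\Dmat_k$ is zero on some block. Concretely, I expect it to say that $\operatorname{ad}_{Jy}$ vanishes on $A$ for all $y\in A$, i.e.\ $A\circ A=0$, and that the $W\to A$ part of the brackets vanishes. If only $A\circ A=0$ comes out, I will feed this back into \eqref{eq:5} with $c=0$, evaluated at $H(e)$ for $e=(a-iJa)/\sqrt2$ with $a\in A$, and at mixed vectors $a+w$ with $w\in W$. Lemma~\ref{lem:positive}'s formula then gives $0=\sum_s|D_{11}^s|^2$, killing the remaining brackets $[Ja,W]$ and $[W,W]$ components. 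That contradicts $A=[\fg,\fg]\ne0$.

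The main obstacle is Step 2: arranging the contraction of \eqref{eq:3} so that, after \eqref{eq:6}, \eqref{eq:7} and unimodularity, it is a manifest sign-definite expression. Nilpotence of $(A,\circ)$ is exactly what should remove the indefinite trace terms. I would first try the contraction $\sum_{i,k}(R_{i\bar i k\bar k}+R_{k\bar i i\bar k})$, write it via \eqref{eq:10} in matrix form as traces of products of $\Dmat_k,\Dmat_k^*$, and then apply $\tr(N_1N_2)=0$ for commuting nilpotents.
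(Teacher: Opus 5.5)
Your Step 1 is correct and is the paper's closing step. The gap is in Steps 2--3: they are conjectural, and the guess about what the trace identity delivers is wrong.

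If you actually carry out the full contraction of \eqref{eq:5} in a unitary frame with $C=0$, the terms $\sum_{i,k}R_{i\bar ik\bar k}$ and $\sum_{i,k}R_{k\bar ki\bar i}$ vanish: their quadratic parts cancel, and their cross terms contain $\sum_kD_{ks}^k=0$. What remains is
\[
c\,n(n+1)=-X-\sum_s|\tau_s|^2,\qquad X=\sum_{p,q,t}D_{pq}^t\overline{D_{qp}^t},\qquad \tau_s=\sum_kD_{sk}^k .
\]
Here $X=0$, but not because of nilpotence. It is the trace of \eqref{eq:16}, so it comes from Jacobi \eqref{eq:7} plus unimodularity. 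Unimodularity does not kill $\tau_s$, and $-\sum_s|\tau_s|^2$ is the only surviving sign-definite term. So $c\le0$ does follow. However, at $c=0$ you learn only that the traces $\tau_s$ vanish. You do not get any block of the $\Dmat_k$ vanishing, and certainly not $A\circ A=0$, so Step 3 has nothing to start from.

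Your fallback also fails. The formula in Lemma~\ref{lem:positive} requires $[x,Jx]=0$. For $x=a+w$, \eqref{eq:12} gives $[x,Jx]=[a,Ja]+2[a,Jw]+[w,Jw]$, which need not vanish. Moreover $D_{11}^s$ only sees brackets $[e_1,\bar e_s]$, never $[W,W]$. Even if you killed $[A\oplus JA,W]$, you would be left with $A\oplus JA$ central and $\fg'=[W,W]$, as in Kodaira--Thurston, which is no contradiction.

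The missing idea is to use $W$.
\begin{itemize}
\item First, $W\ne0$: if $W=0$, then \eqref{eq:12} gives $A=[JA,A]=A\circ A$, contradicting nilpotence.
\item In a unitary frame adapted to $W^{1,0}$, all brackets lie in $A_{\bC}$, so $D_{aq}^p=0$ for $a\in W^{1,0}$. Then \eqref{eq:3} gives directly $c=R_{a\bar aa\bar a}=-\sum_r|D_{ra}^a|^2\le0$.
\item Combined with your Step 1, this forces $c=0$ and $D_{ra}^a=0$.
\item Summing $4\widehat R_{a\bar ak\bar k}=0$ over $k$ and using \eqref{eq:16} yields $\sum_{r,k}|D_{rk}^a|^2=0$.
\item Hence $W$ is central: the brackets $[e_a,\bar e_j]$ are of type $(0,1)$ and lie in $A_{\bC}$, which meets $\fg^{0,1}$ trivially.
\item Then $A=[JA,A]=A^2$, a contradiction.
\end{itemize}
Nilpotence enters only to rule out $A=A^2$, not to kill trace terms.
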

\begin{proof}
Let $A=\fg'$ and use the orthogonal $J$-invariant splitting $\fg=(A\oplus JA)\oplus W$. By Proposition~\ref{prop:pureI-algebra}, the algebra $A$ is nonzero and nilpotent. If $W=0$, then \eqref{eq:12} gives $A=\fg'=[JA,A]=A^2$, a contradiction. Thus $W\ne0$. Choose a unitary frame adapted to $(A\oplus JA)^{1,0}\oplus W^{1,0}$, with $a,b,\ldots$ denoting $W^{1,0}$-indices. All brackets lie in $A_{\bC}$, so
\begin{equation}
\label{eq:13}
D_{aq}^p=0\qquad\text{for all $p,q$ and $a\in W^{1,0}$.}
\end{equation}
Also $C=0$, since $J$ is abelian. If $H\equiv c$, formula \eqref{eq:3} gives
\begin{equation}
\label{eq:14}
c=R_{a\bar a a\bar a}=-\sum_r|D_{ra}^a|^2\le0.
\end{equation}
Suppose $c=0$. Then
\begin{equation}
\label{eq:15}
D_{ra}^a=0\qquad\text{for all $r,a$.}
\end{equation}
With $C=0$, equation \eqref{eq:7} becomes
\[
\sum_r\bigl(-D_{ri}^\ell\overline{D_{jr}^k}+D_{rk}^\ell\overline{D_{jr}^i}\bigr)=0.
\]
Take $k=j$ and sum over $j$. The first term vanishes by \eqref{eq:8}, leaving
\begin{equation}
\label{eq:16}
\sum_{r,j}D_{rj}^\ell\overline{D_{jr}^i}=0.
\end{equation}
In particular,
\begin{equation}
\label{eq:17}
\sum_{r,k}D_{rk}^a\overline{D_{kr}^a}=0.
\end{equation}
Since $\widehat R=0$ by \eqref{eq:5}, expanding $4\widehat R_{a\bar a k\bar k}$ with \eqref{eq:3}, \eqref{eq:13}, and \eqref{eq:15} yields
\begin{equation}
\label{eq:18}
0=\sum_r\bigl(|D_{ka}^r|^2-|D_{ra}^k|^2-|D_{rk}^a|^2-2\operatorname{Re}(D_{rk}^a\overline{D_{kr}^a})\bigr).
\end{equation}
Summation over $k$ cancels the first two terms, and \eqref{eq:17} removes the last one. Hence $D_{rk}^a=0$ for all $a,r,k$. As $C=0$, we have $[e_a,e_j]=0$; moreover $[e_a,\bar e_j]$ has type $(0,1)$ by \eqref{eq:2}, and must vanish because it lies in $A_{\bC}$ and $A_{\bC}\cap\fg^{0,1}=0$. Thus $W$ is central, giving $A=\fg'=[JA,A]=A^2$, again a contradiction. Finally, if $c<0$, take $0\ne x$ in the last nonzero power of $A$. Then $x\circ A=0$, in particular $[Jx,x]=0$, which contradicts Lemma~\ref{lem:positive}.
\end{proof}

\section{Pure type III}
Assume now that $\fg'+J\fg'=\fg$. With $\fh=\fg'\cap J\fg'$, decomposition \eqref{eq:1} reduces to
\begin{equation}
\label{eq:19}
\fg=\fh\oplus(V\oplus JV),\qquad \fg'=\fh\oplus V.
\end{equation}
Write $r=\dim_{\bC}\fh$ and $q=\dim_{\bR}V$. Since a nonzero solvable Lie algebra cannot equal its derived algebra, $q>0$.
\begin{proposition}
\label{prop:pureIII-algebra}
The space $\fh$ is a $J$-invariant abelian ideal. The quotient $\bar\fg=\fg/\fh$ has an abelian complex structure and satisfies
\[
\bar\fg=V\oplus JV,\qquad \bar\fg'=V.
\]
Moreover, $V$ admits a commutative associative product $\circ$ with $V\circ V=V$.
\end{proposition}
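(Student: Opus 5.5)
The plan is to establish the ideal property of $\fh$ directly from integrability, and then to observe that the quotient $\bar\fg=\fg/\fh$ is a Lie algebra of the kind treated in Proposition~\ref{prop:pureI-algebra}. The commutative associative structure then comes from the same computation, but without the unimodularity step.

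\emph{Step 1: $\fh$ is a $J$-invariant abelian ideal.} $J$-invariance is immediate from $\fh=\fg'\cap J\fg'$, and $\fh\subset\fg'$ is abelian because $\fg'$ is. To show $\fh$ is an ideal, fix $x\in\fh$. By \eqref{eq:19} every $y\in\fg$ is a sum of an element of $\fg'$ and an element $Jv$ with $v\in V$. Since $\fg'$ is abelian, $x$ commutes with $\fg'$, so it suffices to show $[x,Jv]\in\fh$.

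Apply the vanishing of the Nijenhuis tensor to the pair $(x,v)$:
\[
J\bigl([Jx,v]+[x,Jv]\bigr)=[Jx,Jv]-[x,v].
\]
Because $x,Jx\in\fh\subset\fg'$ and $v\in\fg'$, we have $[Jx,v]=[x,v]=0$. This leaves $J[x,Jv]=[Jx,Jv]\in\fg'$. Also $[x,Jv]\in\fg'$, so $[x,Jv]\in\fg'\cap J^{-1}\fg'=\fh$. Hence $[\fg,\fh]\subset\fh$.

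\emph{Step 2: the quotient.} Since $\fh$ is a $J$-invariant ideal, $J$ descends to an integrable complex structure on $\bar\fg=\fg/\fh$, because the Nijenhuis tensor passes to the quotient. By the direct sum \eqref{eq:19}, $\bar\fg$ is identified with $V\oplus JV$. Its derived algebra is the image of $\fg'=\fh\oplus V$, namely $V$, and this is abelian.

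The directness of \eqref{eq:19} also gives $V\cap(JV+\fh)=0$, so $\bar\fg'\cap J\bar\fg'=0$. Now the integrability argument leading to \eqref{eq:12} uses only that $\bar\fg'$ is abelian and meets $J\bar\fg'$ trivially. It therefore yields $[JX,JY]=[X,Y]$ and $[JX,Y]=-[X,JY]$ on $\bar\fg$, so $J$ is abelian on $\bar\fg$.

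\emph{Step 3: the product.} Define $x\circ y=[Jx,y]$ for $x,y\in V=\bar\fg'$, with brackets taken in $\bar\fg$. Commutativity and associativity follow verbatim from the proof of Proposition~\ref{prop:pureI-algebra}. The Jacobi identity together with $[Jx,Jy]=[x,y]=0$ gives $[L_x,L_y]=0$, and then commutativity gives associativity.

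For $V\circ V=V$, note that $\bar\fg=V\oplus JV$ with $[V,V]=0$ and $[JV,JV]=[V,V]=0$. Hence
\[
V=\bar\fg'=[JV,V]=V\circ V.
\]
Unimodularity is deliberately not used here; indeed $V\circ V=V$ shows this algebra is far from nilpotent, which is what the later trace argument will exploit.

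The only step requiring genuine care is Step 1, where the Nijenhuis identity must be applied to exactly the right pair to isolate $[x,Jv]$. The remaining steps reduce to bookkeeping with the decomposition \eqref{eq:19}.
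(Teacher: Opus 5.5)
Your proof is correct, and Steps 1 and 2 follow the paper's argument. The only difference there is which pair goes into the Nijenhuis identity: the paper uses $(Jv,u)$ to get $[Jv,Ju]=J[Jv,u]$, and you use $(x,v)$ to get $J[x,Jv]=[Jx,Jv]$. The abelianness of the quotient structure is obtained, as in the paper, by rerunning the argument for \eqref{eq:12}.

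The real difference is Step 3. You define the product intrinsically as $x\circ y=[Jx,y]$ in $\bar\fg$, so commutativity and associativity come directly from Proposition~\ref{prop:pureI-algebra}. The paper instead chooses an admissible frame and defines $\circ$ through the structure constants $D_{\gamma\alpha}^\beta=i a_{\alpha\beta}^\gamma$. It gets commutativity from the cited Chen--Zheng--Streets relation $D_{\gamma\alpha}^\beta=D_{\gamma\beta}^\alpha\in i\bR$ and associativity from the Jacobi relation \eqref{eq:6}. Only at the end does it compare with brackets, via $[Jx_\alpha,x_\beta]\equiv 2\sum_\gamma a_{\alpha\beta}^\gamma x_\gamma\pmod\fh$. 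That congruence shows your product is exactly twice the paper's.

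Your route is shorter, frame-free, and independent of the cited structure relations. The paper's route buys something you do not get: its multiplication operators are literally the middle blocks $iL_\alpha$ of $\Dmat_\alpha$ in \eqref{eq:24}. Lemma~\ref{lem:pureIII-zero} ($\sum_\gamma D^\gamma_{\gamma\alpha}=i\tr L_\alpha$) and the passage \eqref{eq:26}--\eqref{eq:28} (using $L_{x_\alpha\circ x_\beta}=L_\alpha L_\beta$) depend on this. If you used your normalization downstream, your operators would be $2L_\alpha$. That factor is harmless for the trace, nilpotency and idempotent arguments, but it has to be tracked.
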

\begin{proof}
The space $\fh$ is $J$-invariant and abelian by definition. For $u\in\fh$ and $v\in\fg'$, the integrability of $J$, together with $u,Ju,v\in\fg'$, gives
\[
[Jv,Ju]=J[Jv,u].
\]
Both sides lie in $\fg'$, so $[Jv,u]\in\fh$. Since $[\fg',\fh]=0$ and $\fg=\fg'+J\fg'$, we have $[\fg,\fh]\subset\fh$. Thus $J$ descends to $\bar\fg$, where \eqref{eq:19} gives $\bar\fg=V\oplus JV$ and $\bar\fg'=\fg'/\fh=V$. Here $V$ and $JV$ denote their images in $\bar\fg$. Their intersection is zero, and the argument of Proposition~\ref{prop:pureI-algebra} shows that the quotient complex structure is abelian.

Choose a real basis $x_{r+1},\ldots,x_n$ of $V$ and set
\[
e_\alpha=\tfrac12(x_\alpha-iJx_\alpha),\qquad r+1\le\alpha\le n.
\]
Together with a unitary basis $e_1,\ldots,e_r$ of $\fh^{1,0}$, these vectors form an admissible frame in the sense of \cite{chenzhengstreets}. The middle block need not be unitary. The structure relations of \cite[Lemma 4]{chenzhengstreets} include
\begin{equation}
\label{eq:20}
C_{pq}^\alpha=0,\quad \Dmat_i=0,\quad D_{\alpha\beta}^i=0,\quad C_{i\alpha}^j=-\overline{D_{j\alpha}^i},\quad D_{\gamma\alpha}^\beta=D_{\gamma\beta}^\alpha\in i\bR.
\end{equation}
Write
\begin{equation}
\label{eq:21}
D_{\gamma\alpha}^\beta=i a_{\alpha\beta}^\gamma,\qquad a_{\alpha\beta}^\gamma=a_{\beta\alpha}^\gamma\in\bR,
\end{equation}
and define
\begin{equation}
\label{eq:22}
x_\alpha\circ x_\beta=\sum_\gamma a_{\alpha\beta}^\gamma x_\gamma.
\end{equation}
The product is commutative. Equation \eqref{eq:6}, restricted to middle indices, gives $[L_\alpha,L_\beta]=0$, and hence associativity. Finally, $\bar\fg'=V$ is generated by $[JV,V]$, because $V$ and $JV$ are both abelian. As
\[
[Jx_\alpha,x_\beta]\equiv2\sum_\gamma a_{\alpha\beta}^\gamma x_\gamma\pmod\fh,
\]
we conclude that $V=V\circ V$.
\end{proof}

\begin{lemma}
\label{lem:pureIII-zero}
Let $(\fg,J,g)$ be a nonabelian unimodular two-step solvable Hermitian Lie algebra of pure type III. Then $\fh\ne0$, and in the admissible frame above,
\[
\Dmat_i=0,\qquad R_{i\bar y}=R_{x\bar i}=0\quad (i\le r)
\]
for all $x,y\in\fg^{1,0}$.
\end{lemma}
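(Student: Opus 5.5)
The statement has two parts of different nature: the nonvanishing of $\fh$ is structural, while the vanishing of $\Dmat_i$ and of the curvature blocks is a direct computation in the admissible frame. I will prove $\fh\ne0$ by reducing to the pure type I argument, and then read off the curvature blocks from \eqref{eq:9}, \eqref{eq:10} and \eqref{eq:20}.

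\emph{Step 1: $\fh\ne0$.} Suppose $\fh=0$. Then \eqref{eq:19} reads $\fg=V\oplus JV$ with $\fg'=V$ and $\fg'\cap J\fg'=0$. So $\fg$ is of pure type I with $W=0$, and Proposition~\ref{prop:pureI-algebra} applies: $A=\fg'=V$ with $x\circ y=[Jx,y]$ is a nilpotent commutative associative algebra. On the other hand, Proposition~\ref{prop:pureIII-algebra} gives $V\circ V=V$. Equivalently, as in the proof of Theorem~\ref{thm:pureI}, \eqref{eq:12} gives $A=[JA,A]=A^2$. Since $\fg$ is nonabelian, $A\ne0$, and then $A=A^2=A^m$ for all $m$ contradicts nilpotence. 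Hence $\fh\ne0$.

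\emph{Step 2: $\Dmat_i=0$.} For $i\le r$ this is one of the structure relations \eqref{eq:20} from \cite[Lemma 4]{chenzhengstreets}, which hold in the admissible frame. Consequently \eqref{eq:9} gives
\[
A_i=\cH^{-T}\Dmat_i^T\cH^T=0,\qquad B_i=-\overline{\Dmat_i}=0\qquad (i\le r).
\]

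\emph{Step 3: the curvature blocks, via \eqref{eq:10}.} The key point is which entries $D_{pk}^\ell$ can be nonzero.
\begin{itemize}
\item The entry $D_{p i}^{\ell}$ with $i\le r$ is an entry of $\Dmat_i$, so it vanishes.
\item For a middle index $p=\alpha$, the relations $D_{\alpha\beta}^i=0$ and $\Dmat_i=0$ give $D_{\alpha k}^i=0$ for every $k$.
\item Thus $D_{pk}^i$ (upper index $i\le r$) can be nonzero only when $p\le r$. This is exactly where $A_p=B_p=0$.
\end{itemize}

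Now fix $i\le r$. Taking $k=i$ in \eqref{eq:10}, the commutator term $A_iB_\ell-B_\ell A_i$ vanishes. For the remaining terms:
\begin{itemize}
\item $\sum_pD_{pi}^\ell B_p=0$, since $D_{pi}^\ell=0$.
\item $\sum_p\overline{D_{p\ell}^i}A_p$ involves only $p\le r$, where $A_p=0$.
\end{itemize}
Hence $R_{i\bar y}=0$.

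Symmetrically, taking $\ell=i$ in \eqref{eq:10}, the commutator term $A_kB_i-B_iA_k$ vanishes. For the remaining terms:
\begin{itemize}
\item $\sum_p\overline{D_{pi}^k}A_p=0$, since $D_{pi}^k=0$.
\item $\sum_pD_{pk}^iB_p$ involves only $p\le r$, where $B_p=0$.
\end{itemize}
Hence $R_{x\bar i}=0$.

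The only delicate step is the index bookkeeping in Step 3. The entries $D_{j\alpha}^i$ with $j\le r$ need not vanish, since by \eqref{eq:20} they are tied to $C_{i\alpha}^j$. The argument works only because they always multiply $A_j$ or $B_j$ with $j\le r$, which vanish by Step 2. I will also check that the transposes in \eqref{eq:9} cause no trouble: they do not, because $\Dmat_i=0$ makes $A_i$ and $B_i$ vanish regardless of $\cH$.
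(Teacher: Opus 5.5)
Your proof is correct and follows the paper's argument. As in the paper, $\Dmat_i=0$ from \eqref{eq:20} kills $A_i$, $B_i$ and every $D_{pi}^{\ell}$. The coefficients $D_{pk}^{i}$ survive only for $\fh$-indices $p$, where $A_p=B_p=0$; the paper phrases this as $\fh$ being an ideal, while you read it off from $D_{\alpha\beta}^i=0$ and $\Dmat_j=0$.

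For $\fh\ne0$ you cite Proposition~\ref{prop:pureI-algebra} and the $W=0$ step of Theorem~\ref{thm:pureI}, whereas the paper redoes the unimodularity trace computation in the frame. Both rest on the same contradiction: the trace-zero condition forces nilpotence, which is incompatible with $A^2=A\ne0$.
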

\begin{proof}
If $r=0$, there are only middle indices. Thus $C=0$, and \eqref{eq:8} and \eqref{eq:21} give
\[
0=\sum_\gamma D_{\gamma\alpha}^\gamma=i\tr L_\alpha.
\]
Hence $\tr L_x=0$ for all $x\in A$, contrary to Lemma~\ref{lem:algebra} since $A^2=A\ne0$. Thus $r>0$. For $i\le r$, \eqref{eq:20} gives $\Dmat_i=0$ and therefore $A_i=B_i=0$. In \eqref{eq:10}, all terms containing $A_i$ or $D_{pi}^y$ vanish. Since $\fh$ is an ideal, $D_{py}^i$ can be nonzero only for an $\fh$-index $p$, for which $A_p=0$. Thus $R_{i\bar y}=0$, and similarly $R_{x\bar i}=0$.
\end{proof}

\begin{theorem}
\label{thm:pureIII}
A nonabelian unimodular two-step solvable Hermitian Lie algebra of pure type III cannot have constant Chern holomorphic sectional curvature.
\end{theorem}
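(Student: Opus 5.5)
We argue by contradiction: assume $H\equiv c$ on a nonabelian unimodular two-step solvable Hermitian Lie algebra of pure type III, and work in the admissible frame of Proposition~\ref{prop:pureIII-algebra}.

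\emph{Step 1: $c=0$.} By Lemma~\ref{lem:pureIII-zero}, $\fh\ne0$. Take $i\le r$; the $\fh$-block of the frame is unitary. The same lemma gives $R_{i\bar y}=0$ for every $y$, so $R_{i\bar i i\bar i}=0$. Since $\widehat R_{i\bar i i\bar i}=R_{i\bar i i\bar i}$ and $g_{i\bar i}=1$, \eqref{eq:5} forces $c=0$. Hence $\widehat R\equiv0$ in every frame, in particular in the nonunitary admissible one. Condition \eqref{eq:5} with $c=0$ is frame-independent, so the lack of unitarity in the middle block causes no trouble here.

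\emph{Step 2: reduce to the middle block.} I would evaluate $\widehat R_{\alpha\bar\beta\gamma\bar\delta}=0$ for middle indices using \eqref{eq:9} and \eqref{eq:10}. Here $\Dmat_i=0$, and \eqref{eq:20}--\eqref{eq:21} show that $\Dmat_\gamma$ restricted to the middle block is $i$ times the matrix of $L_\gamma$. The off-block entries $D_{j\alpha}^i$ are tied to $C_{i\alpha}^j$ by \eqref{eq:20}. They enter $R_{\alpha\bar\beta}$ only through the terms $\sum_p\overline{D_{p\beta}^\alpha}A_p$ and $\sum_pD_{p\alpha}^\beta B_p$ with $p\le r$. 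These vanish because $A_p=B_p=0$ for $p\le r$.

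Consequently the middle block of $R_{\alpha\bar\beta}$ should be a quadratic expression in the structure constants $a$ of $(V,\circ)$, twisted by the Gram matrix $\cH_V$ of the middle block. Schematically it involves $L_\alpha\cH_V^{-T}L_\beta\cH_V^{T}$, commutator terms, and terms $\sum_\gamma a_{\alpha\beta}^\gamma L_\gamma$. Associativity, coming from \eqref{eq:6}, and commutativity of the $L$'s should simplify it.

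\emph{Step 3: trace identity and contradiction.} By Lemma~\ref{lem:algebra}(ii), since $V\circ V=V\ne0$, there is $y\in V$ with $L_y^2=L_y\ne0$. I would contract $\widehat R(Y,\bar Y,\cdot,\bar\cdot)=0$, where $Y=\tfrac12(y-iJy)$, with the inverse metric over the middle block. The goal is an identity of the form
\[
0=\tr(L_y^2)+(\text{a nonnegative Hermitian-square term})-(\text{terms killed by unimodularity}).
\]
Unimodularity \eqref{eq:8}, applied at a middle index $\alpha$, relates $\tr L_\alpha$ to the $\fh$-trace $\sum_iC_{i\alpha}^i=-\sum_i\overline{D_{i\alpha}^i}$. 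I expect this to be what cancels the cross terms, in the same way \eqref{eq:8} removed a term in the proof of Theorem~\ref{thm:pureI}. Once the identity holds, $\tr L_y^2=\tr L_y=\operatorname{rank}L_y>0$ gives the contradiction.

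\emph{Main obstacle.} The hard part is Step 3: finding the right contraction so that the $\cH_V$-twisted terms combine into a sum of squares, or cancel, rather than leaving indefinite cross terms. I would try first contracting against $g^{\gamma\bar\delta}$ over middle indices only, together with the analogue of \eqref{eq:16} obtained from \eqref{eq:7} by setting $k=j$ and summing. Failing that, I would choose the basis of $V$ adapted to the splitting $V=L_yV\oplus\ker L_y$, which simplifies the idempotent computations.
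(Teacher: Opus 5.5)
\textbf{Step 1 is correct.} It is exactly how the paper gets $c=0$. The gap is Step 3, and it is not merely unfinished: the contraction over the middle block cannot produce the contradiction.

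\textbf{Problems in Step 2.} The middle block of $R_{\alpha\bar\beta}$ is not quadratic in the $a_{\alpha\beta}^\gamma$ alone. By \eqref{eq:25}, the entries $F_\alpha=(D_{i\alpha}^\gamma)$ contribute $-G^{-T}F_\alpha^T\bar F_\beta$ there. Conversely, the entries $D_{j\alpha}^i$ are the entries of $P_\alpha$, which sit inside $A_\alpha$ and $B_\alpha$ themselves. They do not enter only through bracket terms with $p\le r$, as you say. Moreover they never reach the middle block. They appear only in the $\fh$-block, through $A_\alpha B_\beta-B_\beta A_\alpha$ and through the bracket term $i\sum_\gamma a_{\alpha\beta}^\gamma(A_\gamma+B_\gamma)$. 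Since unimodularity \eqref{eq:27} controls exactly the traces $p_\gamma$ of these blocks, it has nothing to cancel in a middle-block contraction.

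\textbf{The middle contraction is not sign-definite.} Take $\fh=\bC$ and $V=\bC$ as a real algebra. Set $[Jx,y]=2x\circ y$ and $[Jx,Jy]=[x,y]=[x,u]=0$. Let $[Jx,u]=\mu(x)u$ for $u\in\fh$, with $\operatorname{Re}\mu(1)=-2$ and $\operatorname{Re}\mu(i)=0$. This is a unimodular algebra of pure type III. Give it a metric with $V\perp JV$, $1\perp i$ and $\rho=|1|^2/|i|^2$. Then $F=0$, and the middle block involves only $(V,\circ)$ and the metric on $V$. A direct computation from \eqref{eq:3} evaluates $\sum_\gamma 4\widehat R_{y\bar y\gamma\bar\gamma}$ over a unitary basis of $V^{1,0}$, at the unique nonzero idempotent $y$. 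It is a positive multiple of $\rho^2+4\rho-13$. This vanishes at $\rho=\sqrt{17}-2$ and is positive for larger $\rho$. So no identity of the form $\tr(L_y^2)+(\text{nonnegative})$ can hold for this contraction.

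\textbf{The missing idea is to contract over the $\fh$-block instead.} By Lemma~\ref{lem:pureIII-zero}, $R_{i\bar y}=R_{x\bar i}=0$ for $\fh$-indices. Hence three of the four terms of $4\widehat R_{\alpha\bar\beta i\bar j}$ vanish, and $R_{\alpha\bar\beta i\bar j}=0$ with no symmetrization cross terms left. Now take the trace of the $\fh$-block of $R_{\alpha\bar\beta}$, using \eqref{eq:25} and \eqref{eq:10}:
\begin{itemize}
\item the commutator of the $P$-blocks is traceless;
\item the $F$-terms give $\tr(U_\beta^*G^{-T}U_\alpha)$;
\item the bracket term gives $i\sum_\gamma a_{\alpha\beta}^\gamma(p_\gamma-\bar p_\gamma)$.
\end{itemize}
Unimodularity does not cancel this last term. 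It converts it, via \eqref{eq:27}, into $\tr L_{x_\alpha\circ x_\beta}=\tr(L_\alpha L_\beta)$. This yields $\tr(L_x^2)=-\tr(U_x^*G^{-T}U_x)\le0$, which is precisely the identity you were aiming for. Your idempotent $y$ with $\tr(L_y^2)=\operatorname{rank}L_y>0$ then gives the contradiction.
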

\begin{proof}
Suppose $H\equiv c$. By Lemma~\ref{lem:pureIII-zero}, $R_{i\bar i i\bar i}=0$ for $i\le r$, so $c=0$ and $\widehat R=0$. The same lemma and \eqref{eq:4} give
\begin{equation}
\label{eq:23}
R_{\alpha\bar\beta i\bar j}=0,\qquad 1\le i,j\le r.
\end{equation}
The metric matrix in the admissible frame is $\cH=\bigl(\begin{smallmatrix}I_r&0\\0&G\end{smallmatrix}\bigr)$ with $G=G^*>0$. For a middle index $\alpha$, write
\begin{equation}
\label{eq:24}
\Dmat_\alpha=\begin{pmatrix}P_\alpha&F_\alpha\\0&iL_\alpha\end{pmatrix},\qquad (P_\alpha)_{ij}=D_{i\alpha}^j,\quad (F_\alpha)_{i\gamma}=D_{i\alpha}^\gamma.
\end{equation}
Equation \eqref{eq:9} becomes
\begin{equation}
\label{eq:25}
A_\alpha=\begin{pmatrix}P_\alpha^T&0\\ G^{-T}F_\alpha^T&iG^{-T}L_\alpha^TG^T\end{pmatrix},\qquad
B_\beta=\begin{pmatrix}-\bar P_\beta&-\bar F_\beta\\0&iL_\beta\end{pmatrix}.
\end{equation}
Put $U_\alpha=F_\alpha^T$ and $p_\alpha=\tr P_\alpha$. Taking the trace of the $\fh^{1,0}$-block of \eqref{eq:10} and using \eqref{eq:23}, we obtain
\begin{equation}
\label{eq:26}
0=\tr(U_\beta^*G^{-T}U_\alpha)+i\sum_\gamma a_{\alpha\beta}^\gamma(p_\gamma-\bar p_\gamma).
\end{equation}
The commutator of the $P$-blocks has zero trace, and unimodularity with \eqref{eq:20}--\eqref{eq:21} gives
\begin{equation}
\label{eq:27}
(p_\gamma-\bar p_\gamma)+i\tr L_\gamma=0,\qquad i(p_\gamma-\bar p_\gamma)=\tr L_\gamma.
\end{equation}
Since $L_{x_\alpha\circ x_\beta}=L_\alpha L_\beta$, it follows that
\begin{equation}
\label{eq:28}
\tr(L_\alpha L_\beta)+\tr(U_\beta^*G^{-T}U_\alpha)=0.
\end{equation}
For $x=\sum_\alpha t_\alpha x_\alpha\in A$ with $t_\alpha\in\bR$, set $L_x=\sum_\alpha t_\alpha L_\alpha$ and $U_x=\sum_\alpha t_\alpha U_\alpha$. Contracting \eqref{eq:28} with $t_\alpha t_\beta$ yields
\begin{equation}
\label{eq:29}
\tr(L_x^2)+\tr(U_x^*G^{-T}U_x)=0.
\end{equation}
Since $G^{-T}$ is positive definite,
\begin{equation}
\label{eq:30}
\tr(L_x^2)\le0\qquad\text{for all $x\in A$.}
\end{equation}
But $A^2=A\ne0$. Lemma~\ref{lem:algebra} supplies $y\in A$ with $L_y^2=L_y\ne0$, and then $\tr(L_y^2)=\tr L_y=\operatorname{rank}L_y>0$, contradicting \eqref{eq:30}.
\end{proof}

\section{The mixed type}
To prove Theorem~\ref{thm:mixed}, let $\fg$ be unimodular and assume
\[
\fh\ne0,\qquad V\ne0,\qquad W\ne0.
\]
Put $r=\dim_{\bC}\fh^{1,0}$, $q=\dim_{\bR}V$, and $m=\dim_{\bC}W^{1,0}$. Choose an admissible frame as in \cite[Lemma 4]{chenzhengstreets}: $e_1,\ldots,e_r$ form a unitary basis of $\fh^{1,0}$, the middle vectors satisfy
\[
V=\operatorname{span}_{\bR}\{e_\alpha+\bar e_\alpha:r+1\le\alpha\le r+q\},
\]
and the remaining $e_a$ form a unitary basis of $W^{1,0}$. Throughout this section, $i,j$ denote $\fh$-indices; $\alpha,\beta,\gamma$ denote middle indices; and $a,b$ denote $W$-indices. The metric matrix has the form
\begin{equation}
\label{eq:31}
\cH=\operatorname{diag}(I_r,G,I_m),\qquad G=G^*>0.
\end{equation}
For unrestricted indices $p,q$, the structure relations in \cite[Lemma 4]{chenzhengstreets} give
\begin{equation}
\label{eq:32}
C_{pq}^\alpha=C_{pq}^a=0,\qquad D_{aq}^p=0,\qquad D_{pj}^i=D_{pj}^\alpha=0,\qquad D_{\alpha\beta}^i=0,
\end{equation}
and
\begin{equation}
\label{eq:33}
C_{j\alpha}^i=-\overline{D_{i\alpha}^j},\qquad D_{\gamma\alpha}^\beta=D_{\gamma\beta}^\alpha\in i\bR,\qquad D_{\alpha y}^x=-\overline{D_{\alpha x}^y},
\end{equation}
where $x,y$ may range over the full frame. In particular,
\begin{equation}
\label{eq:34}
\Dmat_i=\begin{pmatrix}0&0&Q_i\\0&0&Y_i\\0&0&0\end{pmatrix},\qquad
\Dmat_\alpha=\begin{pmatrix}P_\alpha&F_\alpha&Q_\alpha\\0&iL_\alpha&Y_\alpha\\0&0&0\end{pmatrix}.
\end{equation}
Here $P_\alpha,F_\alpha,Q_\alpha,Y_\alpha,L_\alpha$ have sizes $r\times r$, $r\times q$, $r\times m$, $q\times m$, and $q\times q$, respectively.

\begin{lemma}
\label{lem:mixed-zero}
If $H\equiv c$ in the mixed case, then $c=0$. Moreover, in an admissible frame,
\begin{equation}
\label{eq:35}
D_{pa}^b=0\qquad\text{for all $W$-indices $a,b$ and every $p$,}
\end{equation}
and the matrices $Z_a=(D_{ij}^a)_{i,j\le r}$ are skew-symmetric.
\end{lemma}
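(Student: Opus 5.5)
The plan is to bound $c$ from both sides: Lemma~\ref{lem:positive} applied to a vector of $\fh$ will give $c\ge0$, and a direct curvature computation at a $W$-index, as in \eqref{eq:14}, will give $c\le0$. The vanishing statements then come from the equality cases together with polarization.

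\emph{Lower bound.} Since $\fh$ is abelian and $J$-invariant, every $0\ne x\in\fh$ satisfies $[x,Jx]=0$. Lemma~\ref{lem:positive} then gives $c=H(x-iJx)\ge0$. Its proof also shows
\[
H(e)=\sum_s|D_{ee}^s|^2
\]
for any unit $e\in\fh^{1,0}$ completed to a unitary frame. By \eqref{eq:32}, $D_{pj}^i=D_{pj}^\alpha=0$, so only $W$-indices contribute:
\[
H(e)=\sum_a|D_{ee}^a|^2 .
\]

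\emph{Upper bound.} For a $W$-index $a$, \eqref{eq:32} gives $\Dmat_a=0$, hence $A_a=B_a=0$ in \eqref{eq:9}. Formula \eqref{eq:10} therefore reduces to
\[
R_{a\bar a}=-\sum_p\overline{D_{pa}^a}A_p+\sum_pD_{pa}^aB_p .
\]
We only need its $(a,a)$ entry, lowered with $\cH$. Because $\cH=\operatorname{diag}(I_r,G,I_m)$, the $W$ block is unitary and orthogonal to the middle block, so the non-unitarity of $G$ does not enter here. The expected outcome is
\[
c=R_{a\bar aa\bar a}=-\sum_p|D_{pa}^a|^2\le0,
\]
exactly as in \eqref{eq:14}. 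This is the step I expect to need the most care. One must check from \eqref{eq:9}--\eqref{eq:10} which entries $(A_p)_{aa}$ and $(B_p)_{aa}$ survive, and that the sum over $p$ is effectively taken with respect to a unitary basis. I would do this by computing in a frame that is unitary on $\fh\oplus W$. Only $\fh$- and $W$-indices appear in the final expression, because the $D_{pa}^a$ with middle $p$ are paired against $A_p$ through the block structure \eqref{eq:34}.

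\emph{Equality cases and polarization.} Combining the two bounds gives $c=0$, hence $D_{pa}^a=0$ for every $p$ and every $a$. The same computation applies to any unit vector $u=\sum_a u_ae_a\in W^{1,0}$, since the unitary frame of $W^{1,0}$ may be rotated freely while the admissible structure \eqref{eq:32}--\eqref{eq:33} is preserved. Thus, for each fixed $p$, the Hermitian form
\[
\sum_{a,b}\bar u_a D_{pa}^b u_b
\]
vanishes for all complex $u$. A complex matrix whose sesquilinear form vanishes identically is zero, which gives \eqref{eq:35}.

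\emph{Skew-symmetry of $Z_a$.} Likewise, $c=0$ gives $\sum_a|D_{ee}^a|^2=0$ for every unit $e=\sum_i u_ie_i\in\fh^{1,0}$, after rotating the unitary frame of $\fh^{1,0}$. Hence
\[
\sum_{i,j}u_iu_jD_{ij}^a=0\qquad\text{for all }u\in\bC^r .
\]
A complex bilinear form vanishing on the diagonal has vanishing symmetric part. So $D_{ij}^a+D_{ji}^a=0$, i.e.\ each $Z_a$ is skew-symmetric. One should also confirm that $D_{ee}^s$ is bilinear in the frame vector, so that this polarization is legitimate. This holds because $D$ is read off from the brackets in \eqref{eq:2}, which are linear in each slot.
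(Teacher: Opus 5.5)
Your overall plan matches the paper: $R\ge0$ at an $\fh$-vector, $R\le0$ at a $W$-vector, then equality and polarization. The lower bound via Lemma~\ref{lem:positive} and the skew-symmetry of $Z_a$ are fine.

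The upper bound, however, rests on a misreading of \eqref{eq:32}. That relation says $D_{aq}^p=0$ with the $W$-index in the \emph{first} lower slot. So the $a$-th row of every $\Dmat_q$ vanishes, equivalently $A_pe_a=0$ for all $p$. It does not say $\Dmat_a=(D_{pa}^q)_{p,q}=0$. Indeed, the entries $D_{pa}^b$ that \eqref{eq:35} asserts vanish are precisely the last column of $\Dmat_a$; if $\Dmat_a$ were zero there would be nothing to prove. Moreover, with $A_a=B_a=0$ your computation yields no information. We have $A_pe_a=0$, and the $a$-th entry of $B_pe_a$ is $-\overline{D_{ap}^a}=0$. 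Hence the two terms you retain contribute nothing to $R_{a\bar aa\bar a}$, and you would get $R_{a\bar aa\bar a}=0$ identically.

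The entire contribution comes from the term $A_aB_a$ that you discarded. With $d_a=\Dmat_ae_a=(D_{pa}^a)_p$ one finds
\[
R_{a\bar aa\bar a}=-d_a^T\cH^T\overline{d_a}=-\sum_{i}|D_{ia}^a|^2-\sum_{\alpha,\beta}\overline{D_{\alpha a}^a}\,G_{\alpha\beta}\,D_{\beta a}^a\le0 .
\]
So, contrary to your remarks, $G$ does enter and middle indices do appear. This matters: if only $\fh$- and $W$-indices appeared, equality would not give $D_{\alpha a}^a=0$ for middle $\alpha$, and \eqref{eq:35} would remain unproved for middle $p$. It is the positivity of $G$ that forces those entries to vanish.

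There is a clean alternative repair. Work in a fully unitary frame adapted to $(\fg'+J\fg')^{1,0}\oplus W^{1,0}$. There $D_{aq}^p=0$ still holds, because all brackets lie in $\fg'_{\bC}\perp W$. Then \eqref{eq:3} gives $R_{a\bar aa\bar a}=-\sum_s|D_{sa}^a|^2$, exactly as in \eqref{eq:14}. The resulting condition, $D_{pa}^a=0$ for all $p$, is frame independent, since it is equivalent to $[e_a,\bar e_a]=0$.

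With either repair your polarization step goes through. The form to polarize is $\sum_{a,b}u_a\bar u_bD_{pa}^b$, since $D$ is linear in the lower index and antilinear in the upper one. It is sesquilinear, not Hermitian, but the conclusion is unaffected.
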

\begin{proof}
For a unit vector $e_i\in\fh^{1,0}$, \eqref{eq:34} and \eqref{eq:10} give
\[
R_{i\bar i i\bar i}=\sum_a|D_{ii}^a|^2\ge0.
\]
For a unit vector $e_a\in W^{1,0}$, the vanishing of the $W$-rows in \eqref{eq:32} gives $A_pe_a=0$. With $d_a=\Dmat_ae_a$, we obtain
\[
R_{a\bar a a\bar a}=-d_a^T\cH^T\overline{d_a}\le0.
\]
Both quantities equal $c$, so $c=0$ and $D_{pa}^a=0$ for every $p$. This holds for any unit vector of $W^{1,0}$. Polarizing $D_{pZ}^Z=0$ with $Z=\sum_a z_ae_a$ gives $\sum_{a,b}\bar z_az_bD_{pb}^a=0$ and hence \eqref{eq:35}. Replacing $e_i$ by an arbitrary $u\in\fh^{1,0}$ in the first identity yields $D_{uu}^a=u^TZ_au=0$. Therefore $Z_a^T=-Z_a$.
\end{proof}

Write $D_{\gamma\alpha}^\beta=i a_{\alpha\beta}^\gamma$ with $a_{\alpha\beta}^\gamma=a_{\beta\alpha}^\gamma\in\bR$. On $A=V$ define
\begin{equation}
\label{eq:36}
x_\alpha\circ x_\beta=\sum_\gamma a_{\alpha\beta}^\gamma x_\gamma,\qquad x_\alpha=e_\alpha+\bar e_\alpha.
\end{equation}
By \eqref{eq:6}, $[L_\alpha,L_\beta]=0$; as in Proposition~\ref{prop:pureI-algebra}, this makes $A$ a commutative associative algebra. In particular,
\begin{equation}
\label{eq:37}
L_{x\circ y}=L_xL_y,\qquad x,y\in A.
\end{equation}

For $x=\sum_\alpha x^\alpha x_\alpha\in A$ with real coefficients, extend the matrices in \eqref{eq:34} linearly to $P_x,F_x,Q_x,Y_x,L_x$, and put $U_x=F_x^T$ and $p_\alpha=\tr P_\alpha$. In curvature expressions, $x$ denotes the corresponding $(1,0)$-vector $\sum_\alpha x^\alpha e_\alpha$. For each $W$-index $a$, introduce the $q\times r$ matrix $K_a$ and the $r\times m$ matrix $\Xi_x$ by
\begin{equation}
\label{eq:38}
(K_a)_{\alpha i}=D_{\alpha i}^a,\qquad (\Xi_x)_{ia}=\sum_\alpha x^\alpha(GK_a)_{\alpha i}.
\end{equation}
Thus $Q_x$ and $\Xi_x$ are both $r\times m$. Unimodularity and \eqref{eq:32}--\eqref{eq:33} give
\begin{equation}
\label{eq:39}
(p_\alpha-\bar p_\alpha)+i\tr L_\alpha=0,\qquad i(p_\alpha-\bar p_\alpha)=\tr L_\alpha.
\end{equation}

\begin{lemma}
\label{lem:trace-identity}
For every real $x\in A$,
\begin{equation}
\label{eq:40}
4\sum_{i=1}^r\widehat R_{x\bar x i\bar i}=\tr(L_x^2)+\tr(U_x^*G^{-T}U_x)+\|Q_x+\Xi_x\|_{\HS}^2.
\end{equation}
Here $\|\cdot\|_{\HS}$ denotes the Hilbert--Schmidt norm.
\end{lemma}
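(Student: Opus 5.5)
Expand $4\widehat R_{x\bar x i\bar i}=R_{x\bar x i\bar i}+2\operatorname{Re}R_{i\bar x x\bar i}+R_{i\bar i x\bar x}$ using the matrix formula \eqref{eq:10}, with $A_k,B_k$ from \eqref{eq:9}, in the admissible frame \eqref{eq:31}. Then sum over $i\le r$. This is the mixed-type analogue of the computation giving \eqref{eq:26}. The bookkeeping is simplified in three ways. By bilinearity it suffices to work with $\Dmat_x=\sum_\alpha x^\alpha\Dmat_\alpha$ and $A_x,B_x$. For the $\fh$-indices, \eqref{eq:34} gives $A_i$ and $B_i$ supported only in the third block column. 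By Lemma~\ref{lem:mixed-zero} (so $c=0$), the $W$-rows of every $\Dmat$ vanish and $D_{pa}^b=0$.

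The terms come out as follows.

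\begin{itemize}
\item \emph{The term $R_{x\bar x i\bar i}$.} Summed over $i$, this is the trace of the $\fh$-block of $R_{x\bar x}$. Exactly as in the pure type III computation, the commutator $[A_x,B_x]$ contributes $\tr(U_x^*G^{-T}U_x)$ from the $G^{-T}F^T$ and $\bar F$ blocks, since $P$-block commutators are traceless. The connection terms $-\sum_p\overline{D_{px}^x}A_p+\sum_p D_{px}^xB_p$ restricted to $p=\gamma$ give $i\sum_\gamma a^\gamma_{xx}(p_\gamma-\bar p_\gamma)$. By \eqref{eq:39} and \eqref{eq:37} this becomes $\tr L_{x\circ x}=\tr(L_x^2)$.
\item \emph{The new contributions with $p$ an $\fh$- or $W$-index.} Those with $p=a$ vanish because $\Dmat_a$ has zero first columns. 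Those with $p=j$ bring in $D_{jx}^x$, whose $W$-part interacts with $Q$.
\item \emph{The mixed terms $R_{i\bar x x\bar i}$ and $R_{i\bar i x\bar x}$.} These involve $A_i=\cH^{-T}\Dmat_i^T\cH^T$, which has entries built from $Q_i$ and $Y_i$ placed in the $W$-row block, and $B_i=-\overline{\Dmat_i}$. Products such as $A_xB_i$ and $B_xA_i$, traced over $\fh$, produce $\sum_{i,a}$ bilinear expressions in $D_{ix}^a$ (entries of $Q_x$) and $D_{xi}^a$.
\end{itemize}

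The key identification uses \eqref{eq:33}, $D_{\alpha y}^x=-\overline{D_{\alpha x}^y}$, together with the Jacobi relation \eqref{eq:7}. Together they should express the relevant $D_{\cdot i}^a$ entries through $GK_a$, which is exactly how $\Xi_x$ in \eqref{eq:38} is built, since the factor $G$ arises from $\cH^T$ in $A_x$. I expect the collection of these terms to assemble into $\|Q_x\|^2+2\operatorname{Re}\tr(Q_x^*\Xi_x)+\|\Xi_x\|^2$. Any leftover terms of the form $\sum_{i,j}|D_{ij}^a|^2$ or $Z_a$-contractions should cancel, using the skew-symmetry of $Z_a$ from Lemma~\ref{lem:mixed-zero} and the vanishing \eqref{eq:35}.

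The main obstacle is this last step: completing the square to obtain $\|Q_x+\Xi_x\|_{\HS}^2$. The difficulty is tracking the transposes in \eqref{eq:9} and the nonunitary middle block precisely enough that the cross term matches. I would first verify the identity in the case $G=I$ (unitary middle block) to pin down signs. I would then reinsert $G$, checking that each occurrence of $K_a$ arrives multiplied by $G$ on the correct side. Finally I would use \eqref{eq:6} with one $W$-index to eliminate the stray $Y$-terms, and \eqref{eq:8} to eliminate the $\sum_p D_{pp}$ traces.
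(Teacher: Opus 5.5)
Your route is the paper's: expand the four terms of $4\widehat R_{x\bar x i\bar i}$ with \eqref{eq:9}--\eqref{eq:10} in the admissible frame and sum over $i$. However, the proposal leaves the decisive computation as an expectation. Several intermediate claims, and the tools you plan to use to finish, are off, so as written there is a gap.

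\emph{The first term.} You undercount here. Compared with pure type III, the $\fh$-block of $A_xB_x-B_xA_x$ acquires the extra term $\bar Q_xQ_x^T$, from the $\fh$-row/$W$-column block $-\bar Q_x$ of $B_x$ meeting the $W$-row block $Q_x^T$ of $A_x$. Thus
\[
\sum_iR_{x\bar x i\bar i}=\tr(L_x^2)+\tr(U_x^*G^{-T}U_x)+\|Q_x\|_{\HS}^2,
\]
and this is the source of $\|Q_x\|_{\HS}^2$.

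\emph{The connection terms.} Your accounting is also off. The $p=a$ terms vanish because their coefficients $D_{ax}^x$ vanish, since every $\Dmat$ has zero $W$-rows by \eqref{eq:32}. They do not vanish because $\Dmat_a$ has zero $\fh$-columns, which need not hold. The $p=j$ terms contribute nothing at all: $A_j$ and $B_j$ are supported in the $W$-row and $W$-column, so their $\fh$-blocks are zero. There is no interaction with $Q$.

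\emph{The mixed terms.} These need no Jacobi relation, no \eqref{eq:33}, no skew-symmetry of $Z_a$, no \eqref{eq:35}, and not even $H\equiv c$. Relying on Lemma~\ref{lem:mixed-zero} would make your argument conditional on a hypothesis the statement does not contain.

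Use the convention $R_{k\bar\ell p\bar s}=(R_{k\bar\ell}^T\cH)_{ps}$. Then $\sum_iR_{i\bar x x\bar i}$ reads off the $\fh$-by-middle block of $R_{i\bar x}$ against $x$. The terms there behave as follows:
\begin{itemize}
\item $D_{pi}^x=0$ by \eqref{eq:32}.
\item $A_iB_x$ lives in the $W$-row.
\item The coefficients $D_{px}^i$ vanish unless $p$ is an $\fh$-index, and $A_j$ has zero $\fh$-by-middle block.
\end{itemize}
Only $-B_xA_i$ survives, with block $\bar Q_xY_i^TG^T$. Since $(Y_i)_{\beta a}=D_{\beta i}^a=(K_a)_{\beta i}$, the vector $Y_i^TG^Tx$ is exactly the $i$th row of $\Xi_x$. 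This gives $\tr(Q_x^*\Xi_x)$, and the factor $G$ comes from the $\cH^T$ in $A_i$, not from $A_x$.

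Likewise $D_{pi}^i=0$ leaves $R_{i\bar i}=A_iB_i-B_iA_i$. Its middle block is $\bar Y_iY_i^TG^T$, and contracting with $\cH$ gives $(Gx)^T\bar Y_iY_i^TG^Tx=\sum_a|(\Xi_x)_{ia}|^2$.

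The $Z_a$-type terms you feared, such as $\bar Q_iQ_i^T$ or $\bar Y_xQ_i^T$, do occur in these curvature matrices. They sit in blocks that these contractions never read. Once the index convention is fixed, the square $\|Q_x+\Xi_x\|_{\HS}^2$ assembles directly from the block structure \eqref{eq:32}--\eqref{eq:34} together with \eqref{eq:37} and \eqref{eq:39}.
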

\begin{proof}
The connection matrices in the middle direction $x$ are
\[
A_x=\begin{pmatrix}P_x^T&0&0\\G^{-T}F_x^T&iG^{-T}L_x^TG^T&0\\Q_x^T&Y_x^TG^T&0\end{pmatrix},\qquad
B_x=\begin{pmatrix}-\bar P_x&-\bar F_x&-\bar Q_x\\0&iL_x&-\bar Y_x\\0&0&0\end{pmatrix}.
\]
For $i\le r$ they are
\[
A_i=\begin{pmatrix}0&0&0\\0&0&0\\Q_i^T&Y_i^TG^T&0\end{pmatrix},\qquad
B_i=\begin{pmatrix}0&0&-\bar Q_i\\0&0&-\bar Y_i\\0&0&0\end{pmatrix}.
\]
Taking the trace over the unitary basis of $\fh^{1,0}$ in \eqref{eq:10}, the $\fh$-block of the first commutator contributes
\[
\tr\bigl(-P_x^T\bar P_x+\bar P_x P_x^T+\bar F_xG^{-T}F_x^T+\bar Q_xQ_x^T\bigr)
=\tr(U_x^*G^{-T}U_x)+\|Q_x\|_{\HS}^2.
\]
By \eqref{eq:39} and \eqref{eq:37}, the bracket-coefficient contribution is
\[
i\sum_\gamma(x\circ x)^\gamma(p_\gamma-\bar p_\gamma)=\tr L_{x\circ x}=\tr(L_x^2).
\]
The four terms in the symmetrization are therefore
\begin{align}
\sum_iR_{x\bar x i\bar i}&=\tr(L_x^2)+\tr(U_x^*G^{-T}U_x)+\|Q_x\|_{\HS}^2,\label{eq:41}\\
\sum_iR_{i\bar x x\bar i}&=\tr(Q_x^*\Xi_x),\label{eq:42}\\
\sum_iR_{x\bar i i\bar x}&=\tr(\Xi_x^*Q_x),\label{eq:43}\\
\sum_iR_{i\bar i x\bar x}&=\|\Xi_x\|_{\HS}^2.\label{eq:44}
\end{align}
Indeed, the mixed contractions are $\sum_{i,a}(Q_x)_{ia}\overline{(\Xi_x)_{ia}}$ and its conjugate, while the last one is $\sum_{i,a}|(\Xi_x)_{ia}|^2$. Adding the four identities proves \eqref{eq:40}.
\end{proof}

\begin{lemma}
\label{lem:nilpotent}
The algebra $A=(V,\circ)$ is nilpotent. Moreover, for every real $x\in A$,
\begin{equation}
\label{eq:45}
F_x=0,\qquad Q_x+\Xi_x=0.
\end{equation}
\end{lemma}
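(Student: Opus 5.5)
By Lemma~\ref{lem:mixed-zero} we have $c=0$, so $\widehat R=0$ by \eqref{eq:5}. In particular the left-hand side of \eqref{eq:40} vanishes for every real $x\in A$. This gives
\[
\tr(L_x^2)+\tr(U_x^*G^{-T}U_x)+\|Q_x+\Xi_x\|_{\HS}^2=0 .
\]
The two last terms are nonnegative, because $G^{-T}$ is positive definite (it is the conjugate of $G^{-1}>0$) and the third is a squared norm. Hence $\tr(L_x^2)\le0$ for all real $x\in A$, and we are in exactly the situation of \eqref{eq:30} in the proof of Theorem~\ref{thm:pureIII}.

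First we prove nilpotence. Suppose $A$ were not nilpotent. Lemma~\ref{lem:algebra}\,(\ref{lem:algebra-ii}) supplies $y\in A$ with $L_y^2=L_y\ne0$, and then
\[
\tr(L_y^2)=\tr L_y=\operatorname{rank}L_y>0,
\]
a contradiction. So $A$ is nilpotent, which means every $L_x$ is nilpotent. In particular $\tr(L_x^2)=0$ for all real $x$: $L_x^2$ is nilpotent because the $L$'s commute (indeed $L_x^2=L_{x\circ x}$ by \eqref{eq:37}, which lies in the nilpotent family).

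Substituting $\tr(L_x^2)=0$ back into the displayed identity, the sum of the two remaining nonnegative terms vanishes, so each vanishes. From $\|Q_x+\Xi_x\|_{\HS}=0$ we get $Q_x+\Xi_x=0$. From $\tr(U_x^*G^{-T}U_x)=0$ with $G^{-T}$ positive definite we get $U_x=0$, hence $F_x=U_x^T=0$. Both hold for every real $x\in A$, which is \eqref{eq:45}.

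The only point needing care is the positivity of $G^{-T}$. It is Hermitian positive definite, being $\overline{G^{-1}}$, so $\tr(U^*G^{-T}U)=\sum_k u_k^*G^{-T}u_k$ over the columns $u_k$ of $U$, and this vanishes only when $U=0$. Everything else is a direct reuse of Lemma~\ref{lem:trace-identity} and Lemma~\ref{lem:algebra}, so I expect no real obstacle.
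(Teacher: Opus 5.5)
Your proof is correct and follows the paper's argument essentially verbatim: $c=0$ gives $\widehat R=0$, the trace identity \eqref{eq:40} forces $\tr(L_x^2)\le0$, Lemma~\ref{lem:algebra} rules out non-nilpotence, and then $\tr(L_x^2)=0$ makes the two nonnegative terms vanish. One small remark: $\tr(L_x^2)=0$ already follows from the nilpotence of $L_x$ itself, so you do not need commutativity of the $L$'s for that step.
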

\begin{proof}
Lemma~\ref{lem:mixed-zero} gives $c=0$, hence $\widehat R=0$. Since $G^{-T}$ is positive definite, \eqref{eq:40} implies $\tr(L_x^2)\le0$ for all $x\in A$. If $A$ were not nilpotent, Lemma~\ref{lem:algebra} would give $y\in A$ with $L_y^2=L_y\ne0$ and hence $\tr(L_y^2)=\operatorname{rank}L_y>0$, a contradiction. Thus $A$ is nilpotent and $\tr(L_x^2)=0$ for every $x$. The other two terms in \eqref{eq:40} are nonnegative, so both vanish, proving \eqref{eq:45}.
\end{proof}

Let $\Ann(A)=\{z\in A:z\circ A=0\}$. For every $W$-index $a$, put
\begin{equation}
\label{eq:46}
S_a=(D_{\alpha\beta}^a)_{\alpha,\beta},\qquad Y_x=(S_1x,\ldots,S_mx).
\end{equation}
We use $\|v\|_{G^{-1}}^2=v^*G^{-1}v$ for $v\in\bC^q$ and $\|v\|_G^2=v^TGv=v^*Gv$ for real $v$. In particular, $\|v\|_G>0$ for every nonzero real $v$.

\begin{lemma}
\label{lem:annihilator}
For $u\in A^2$ and $z\in\Ann(A)$,
\begin{equation}
\label{eq:47}
u^TGz=0.
\end{equation}
Consequently $A^2=0$.
\end{lemma}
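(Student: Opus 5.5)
We will obtain \eqref{eq:47} from a suitable component of $\widehat R=0$ (Lemma~\ref{lem:mixed-zero}), simplified with the vanishings already established: $F_x=0$ and $Q_x=-\Xi_x$ from Lemma~\ref{lem:nilpotent}, $D_{pa}^b=0$ from \eqref{eq:35}, the skew-symmetry of $Z_a$, and nilpotence of $A$. The components of $\widehat R$ we have not yet used are those involving two middle directions and two $\fh$- or $W$-directions. We will look at $\widehat R_{u\bar z i\bar i}$ summed over $i$, with $u,z$ the $(1,0)$-vectors attached to real $u\in A^2$ and $z\in\Ann(A)$. This is the polarization of \eqref{eq:40}. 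With $F=0$ and $Q+\Xi=0$, the polarized form reads
\[
0=4\sum_i\widehat R_{u\bar z i\bar i}=\tr(L_zL_u)+\text{(cross terms)}.
\]
Because $z\in\Ann(A)$, we have $L_z=0$, so the $L$-term drops out. The surviving information must therefore come from the middle block of \eqref{eq:10}, where $A_x$ contains $iG^{-T}L_x^TG^T$ and $Y_x^TG^T$. This is exactly where the metric $G$ enters.

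The first step is to compute instead $\sum_\beta$-traces over the middle block, that is, the $R_{u\bar z\beta\bar\beta}$-type contraction weighted by $G^{-1}$. Write $u=v\circ w$, which is legitimate by linearity since $A^2$ is spanned by products. The bracket-coefficient part of \eqref{eq:10} contributes $\sum_p D_{p\,\cdot}^{\cdot}B_p$ terms with coefficients $a^\gamma_{\alpha\beta}$, and these produce $\tr(L_v\cdots)$ paired against $z$ through $G$. Using \eqref{eq:7} with mixed middle indices, I expect an identity of the form
\[
\langle L_vL_w\,\cdot\,,z\rangle_G=\langle u,z\rangle_G\cdot(\text{const}),
\]
or directly
\[
\sum_{\gamma}a^\gamma_{vw}(Gz)_\gamma=\text{(terms vanishing because }L_z=0\text{ and }F=0\text{)}.
\]
Concretely, for the components $\widehat R_{v\bar z w\bar\beta}$ with $\beta$ summed against $G$, the commutator terms $A_vB_z-B_zA_v$ vanish on the middle block because $L_z=0$ and $F=0$. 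The term $-\sum_p\overline{D_{pz}^v}A_p$ vanishes as well, since $D_{pz}^{\cdot}$ on middle indices equals $ia^{\cdot}_{z\cdot}$, which is zero. What remains is $\sum_p D_{pv}^{z}B_p$. After contraction with $w$ this gives $(v\circ w)^TGz$ times a positive constant, plus $Y$-terms. The $Y$-terms should cancel in the symmetrization by the antisymmetry $D_{\alpha y}^x=-\overline{D_{\alpha x}^y}$ from \eqref{eq:33}.

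The consequence will follow from nilpotence. If $A^2\neq0$, take the last nonzero power $A^k$ with $k\ge2$. Then $A^k\subset A^2\cap\Ann(A)$, and choosing $0\neq z=u\in A^k$ gives $\|u\|_G^2=0$, a contradiction. Hence $A^2=0$.

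The main obstacle is the bookkeeping in the nonunitary middle block. Transposes of $G$ appear through \eqref{eq:9}, and one must verify that the $Y_x$- and $S_a$-contributions, defined in \eqref{eq:46}, cancel or combine into a term of definite sign rather than survive. If they do not cancel, I would first try adding the $W$-trace $\sum_a\widehat R_{u\bar z a\bar a}$, hoping to absorb them into a Hilbert--Schmidt square as in Lemma~\ref{lem:trace-identity}.
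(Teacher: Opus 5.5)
There is a genuine gap: the identity your argument rests on does not exist. Take $z\in\Ann(A)$ and real middle $v$. The term you expect to survive in $R_{v\bar z}$, namely $\sum_pD_{pv}^zB_p$, is identically zero. Indeed $D_{\gamma v}^z=i(v\circ z)^\gamma=0$ for middle $\gamma$, $D_{iv}^z=(F_v)_{iz}=0$ by \eqref{eq:45}, and $D_{av}^z=0$ by \eqref{eq:32}. Conversely, the commutator does \emph{not} vanish on the middle block: with $F=0$ and $L_z=0$ it equals $\bar Y_zY_v^TG^T$. Hence, for real middle $w,n$, $R_{v\bar z w\bar n}=\sum_a\overline{n^TGS_az}\,w^TGS_av$, which contains no pairing $u^TGz$ at all.

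Pairings of $z$ with $A^2$ enter $4\widehat R_{v\bar z w\bar n}$ only through the two symmetrization terms with $\bar z$ in the last slot. They appear as $z^TG^TL_nG^{-T}(L_v^TG^Tw+L_w^TG^Tv)-2z^TG^T(v\circ w\circ n)$. There they are mixed with $\sum_a(\overline{n^TGS_az}+\overline{z^TGS_an})(w^TGS_av+v^TGS_aw)$. This $S$-contribution does not cancel by \eqref{eq:33}: the skewness of $GS_a$ is \eqref{eq:51}, which the paper obtains only \emph{after} $A^2=0$. A component linear in $z$ offers no positivity to exploit. Your plan gives no way to remove the $S_a$-terms and the $A^3$-pairing, so it cannot isolate $u^TGz$.

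The missing idea is a positivity argument in which $z$ occurs twice. Use $R_{x\bar xx\bar x}=0$ for real middle $x$, written out as \eqref{eq:48}:
\[
0=\|L_x^TGx\|_{G^{-1}}^2+\sum_a|x^TGS_ax|^2-\|x\circ x\|_G^2-2\operatorname{Re}(x^TGL_x^2x).
\]
With $x=z$ this forces $z^TGS_az=0$. With $x=y+tz$ one has $L_x=L_y$, $x\circ x=y\circ y$ and $L_y^2z=0$. So the $t^2$-coefficient is $\|L_y^TGz\|_{G^{-1}}^2+\sum_a|z^TGS_ay+y^TGS_az|^2$, and it must vanish. This gives $L_y^TGz=0$, i.e.\ $(y\circ u)^TGz=0$ for all $y,u$, which is \eqref{eq:47}.

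Your fallback of absorbing the $Y$-terms into a Hilbert--Schmidt square points in this direction. However, a square needs a diagonal quantity such as this $t^2$-coefficient. The quantity $\sum_a\widehat R_{u\bar z a\bar a}$ is sesquilinear in the distinct vectors $u$ and $z$, so it is not a sum of squares. Your final step is correct and matches the paper: you deduce $A^2=0$ from \eqref{eq:47} by taking $u=z\neq0$ in the last nonzero power of $A$.
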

\begin{proof}
By \eqref{eq:45}, the $\fh$-block decouples when all four curvature arguments are middle vectors. The middle block of \eqref{eq:10} is
\[
-G^{-T}L_x^TG^TL_x+L_xG^{-T}L_x^TG^T+\bar Y_xY_x^TG^T-G^{-T}(L_x^2)^TG^T-L_x^2.
\]
Contracting with the real middle vector $x$ gives
\begin{equation}
\label{eq:48}
\begin{aligned}
R_{x\bar x x\bar x}&=\|L_x^TGx\|_{G^{-1}}^2+\sum_a|x^TGS_ax|^2\\
&\quad-\|x\circ x\|_G^2-2\operatorname{Re}(x^TGL_x^2x).
\end{aligned}
\end{equation}
For $z\in\Ann(A)$, $L_z=0$, and \eqref{eq:48} with $x=z$ gives
\begin{equation}
\label{eq:49}
z^TGS_az=0\qquad\text{for every $a$.}
\end{equation}
Fix $y\in A$ and replace $x$ by $y+tz$, $t\in\bR$. Then $L_x=L_y$, $x\circ x=y\circ y$, and $L_y^2z=0$. By \eqref{eq:49}, the coefficient of $t^2$ in \eqref{eq:48} is
\[
\|L_y^TGz\|_{G^{-1}}^2+\sum_a|z^TGS_ay+y^TGS_az|^2.
\]
Since $R_{x\bar x x\bar x}=0$, this coefficient vanishes. Positive definiteness gives $L_y^TGz=0$ for every $y$, and therefore $(y\circ u)^TGz=0$ for all $y,u\in A$. This proves \eqref{eq:47}. If $A^2\ne0$ and $N\ge3$ is the nilpotency index, then $0\ne A^{N-1}\subset A^2\cap\Ann(A)$. Taking $0\ne z\in A^{N-1}$ and $u=z$ in \eqref{eq:47} gives $z^TGz=0$, a contradiction.
\end{proof}

\Needspace{7\baselineskip}
Fix a $W$-index $a$ and set
\begin{equation}
\label{eq:50}
E_a=(D_{pk}^a)_{p,k\le r+q}=\begin{pmatrix}Z_a&Q^{(a)}\\K_a&S_a\end{pmatrix},\qquad
\cH_0=\operatorname{diag}(I_r,G),
\end{equation}
where $Z_a=(D_{ij}^a)$ and $Q^{(a)}=(D_{i\alpha}^a)$.

\Needspace{9\baselineskip}
Since $A^2=0$, all of $A$ belongs to $\Ann(A)$, and \eqref{eq:48} implies $x^TGS_ax=0$ for every real $x\in A$. Thus
\begin{equation}
\label{eq:51}
(GS_a)^T=-GS_a.
\end{equation}
Also $Z_a^T=-Z_a$ by Lemma~\ref{lem:mixed-zero}, while \eqref{eq:45} and \eqref{eq:38} give $Q^{(a)}=-(GK_a)^T$. Consequently,
\begin{equation}
\label{eq:52}
(\cH_0E_a)^T=-\cH_0E_a.
\end{equation}

\Needspace{9\baselineskip}
\begin{lemma}
\label{lem:E-zero}
Suppose $H\equiv0$, $D_{pa}^b=0$ for all $W$-indices $a,b$ and every $p$, and $(\cH_0E_a)^T=-\cH_0E_a$. Then $E_a=0$ for every $a$.
\end{lemma}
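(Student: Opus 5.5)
The plan is to imitate the trace argument from the proof of Theorem~\ref{thm:pureI}. There, the identity $\widehat R_{a\bar a k\bar k}=0$ was summed over $k$, and the result reduced to $\sum_{r,k}|D_{rk}^a|^2=0$. Here the frame is unitary on $\fh^{1,0}$ and on $W^{1,0}$ but not on the middle block, so I would replace the sum over $k$ by the metric trace over $(\fh\oplus V\oplus JV)^{1,0}$ with respect to $\cH_0$. Concretely, fix a $W$-index $a$ and consider
\[
T_a=\sum_{p,k\le r+q}(\cH_0^{-1})_{kp}\,4\widehat R_{a\bar a p\bar k},
\]
which vanishes because $H\equiv0$ gives $\widehat R=0$ by \eqref{eq:5}. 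I would expand the four terms of \eqref{eq:4} using \eqref{eq:10}, with connection matrices read off from \eqref{eq:9} and \eqref{eq:34}.

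Since the $W$-rows of every $\Dmat_p$ vanish by \eqref{eq:32}, we have $A_pe_a=0$. Together with the hypothesis $D_{pa}^b=0$, this means the terms involving $B_a$ or $A_a$ only see the block $\Dmat_a$ restricted to non-$W$ directions. I expect $T_a$ to split into three kinds of contribution.
\begin{enumerate}
\item Two terms of the form $\pm\tr(\Dmat_a\cH_0^{-1}\Dmat_a^*\cH_0)$, coming from $R_{a\bar a p\bar k}$. These should cancel after tracing, as the first two terms in \eqref{eq:18} did. Unimodularity \eqref{eq:8} should kill the leftover $\sum_s D_{sa}^s$-type contribution, just as it killed the first term in the derivation of \eqref{eq:16}.
\item A Hilbert--Schmidt term $-\tr(E_a^*\cH_0E_a\cH_0^{-1})$, coming from the bracket-coefficient part $\sum_pD_{pk}^a B_p$ in $R_{p\bar a a\bar k}$ and its conjugate-symmetric partner. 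This is a negative definite expression in $E_a$.
\item A cross term of the form $-2\operatorname{Re}\tr(E_a\cH_0^{-1}\overline{E_a^T\cH_0}\,\cdots)$, the analogue of $-2\operatorname{Re}(D_{rk}^a\overline{D_{kr}^a})$ in \eqref{eq:18}.
\end{enumerate}

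In Theorem~\ref{thm:pureI}, the cross term was removed by \eqref{eq:17}. Here I would instead use the hypothesis $(\cH_0E_a)^T=-\cH_0E_a$, which is equivalent to $E_a^T=-\cH_0E_a\cH_0^{-1}$ since $\cH_0$ is Hermitian and, on the relevant real structure, $\cH_0^T=\bar\cH_0$. Substituting this into the cross term should turn it into a real multiple of the same weighted norm $\|\cH_0^{1/2}E_a\cH_0^{-1/2}\|_{\HS}^2$. Alternatively, taking $k=j$ in \eqref{eq:7} and summing, which gives the analogue of \eqref{eq:16}, may show directly that the cross term vanishes. Either way, $T_a=0$ should reduce to $\kappa\,\|\cH_0^{1/2}E_a\cH_0^{-1/2}\|_{\HS}^2=0$ with $\kappa\neq0$, giving $E_a=0$.

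The main obstacle is the bookkeeping in the nonunitary middle block. One must check that the transposes $\cH^{-T}(\cdot)^T\cH^T$ in \eqref{eq:9} combine with the metric trace into genuinely positive quantities, with no indefinite remainder such as $\tr(L_x\cdots)$ terms. One must also confirm that the two contributions in the first group really cancel rather than leave a multiple of $\tr(\Dmat_a^*\Dmat_a)$. To control the signs, I would first carry out the computation with $G=I$, where it reduces verbatim to \eqref{eq:18}. I would then redo it for general $G$, tracking each factor of $G$ through the block formulas already written out in the proof of Lemma~\ref{lem:trace-identity}. If the coefficient $\kappa$ turned out to vanish, I would fall back on the unsummed identities $\widehat R_{a\bar a p\bar p}=0$ for individual unit vectors $p$, combined with the skew-symmetry.
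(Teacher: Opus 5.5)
You have the paper's strategy: contract $4\widehat R_{a\bar a p\bar k}=0$ over the non-$W$ directions, let the two $|D_{ka}^{s}|^2$-type sums cancel, and use skew-symmetry to flip the sign of the cross term against the Hilbert--Schmidt term. But as written the proposal is not yet a proof. The decisive coefficient $\kappa$ is only expected, never computed, and the one explicit reformulation you rely on is incorrect. From $(\cH_0E_a)^T=-\cH_0E_a$ you get $E_a^T=-\cH_0E_a\cH_0^{-T}=-\cH_0E_a\overline{\cH_0}^{-1}$, not $-\cH_0E_a\cH_0^{-1}$. The two agree only if $G$ is real, and in general it is not: $g(e_\alpha,\bar e_\beta)=\tfrac12\bigl(g(x_\alpha,x_\beta)+i\,g(x_\alpha,Jx_\beta)\bigr)$, and $V$ need not be orthogonal to $JV$. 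For the same reason the correct weighted norm is $\tr(E_a^*\cH_0E_a\cH_0^{-T})$, not $\tr(E_a^*\cH_0E_a\cH_0^{-1})$. This is exactly the transpose issue flagged after \eqref{eq:9}. With your formulas, the substitution into the cross term would not produce a multiple of the Hilbert--Schmidt term.

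The idea that removes what you call the main obstacle is the one the paper uses. The contraction is tensorial, so change only the non-$W$ part of the frame by $T$ with $T^T\cH_0\bar T=I$, keeping each $e_a$ fixed. The new frame is unitary and the $W$-rows still vanish. Also $D_{pa}^b$ still vanishes, because the $(0,1)$-part of $[e_a,\bar e_b]$ is unchanged. Finally, $E_a'=\bar T^{-1}E_aT=T^T\cH_0E_aT$ is honestly skew-symmetric. The computation is then literally \eqref{eq:18} summed over $k\le r+q$:
\[
0=-\|E_a'\|_{\HS}^2-2\operatorname{Re}\tr(E_a'\overline{E_a'})=\|E_a'\|_{\HS}^2,
\]
since $\tr(E_a'\overline{E_a'})=-\|E_a'\|_{\HS}^2$. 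So $\kappa=1$, with $\|E_a'\|_{\HS}^2=\tr(E_a^*\cH_0E_a\cH_0^{-T})$. No separate general-$G$ computation and no fallback to unsummed identities is needed. Your remark that the case $G=I$ is verbatim \eqref{eq:18} is already the whole proof, once you see that the general case reduces to it.

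Two smaller points. First, unimodularity plays no role. The only trace-type terms pair $D_{ks}^k$ or $D_{sk}^k$ with $D_{sa}^a$ or $D_{as}^a$, and these vanish by the hypothesis $D_{pa}^b=0$ and the vanishing $W$-rows. Moreover, \eqref{eq:8} could not remove such terms here anyway, since $C\ne0$ in the mixed case (for instance $C_{j\alpha}^i=-\overline{D_{i\alpha}^j}$). For the same reason, the route through \eqref{eq:7} does not reduce cleanly to an analogue of \eqref{eq:16}: the $C$-terms no longer drop out. Second, your attributions are swapped. In the unitary frame the Hilbert--Schmidt term comes from $R_{k\bar k a\bar a}$, while the two cross terms come from $R_{k\bar a a\bar k}$ and $R_{a\bar k k\bar a}$.
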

\begin{proof}
We can make the non-$W$ block unitary without changing the $W$-frame, since the curvature contraction below is tensorial. Choose $T$ satisfying
\[
T\in\operatorname{GL}(r+q,\bC),\qquad T^T\cH_0\bar T=I,
\]
and put $e_p'=\sum_kT_{kp}e_k$ for $p\le r+q$. The matrices with upper index $a$ transform as
\[
E_a'=\bar T^{-1}E_aT=T^T\cH_0E_aT,
\]
which is skew-symmetric by \eqref{eq:52}. This change of frame need not preserve the real middle space, but it preserves the curvature trace and the vanishing of the $W$-rows.

In the resulting unitary frame, sum the four terms of $4\widehat R_{a\bar a k\bar k}=0$ over $k\le r+q$. By \eqref{eq:3},
\[
0=\sum_{k,p\le r+q}\bigl(|D_{ka}^{\prime p}|^2-|D_{pa}^{\prime k}|^2\bigr)-\|E_a'\|_{\HS}^2-2\operatorname{Re}\tr(E_a'\overline{E_a'}).
\]
The double sums cancel. Skew-symmetry gives
\[
\tr(E_a'\overline{E_a'})=\sum_{p,k}E_{a,pk}'\overline{E_{a,kp}'}=-\|E_a'\|_{\HS}^2.
\]
It follows that $\|E_a'\|_{\HS}^2=0$, and hence $E_a=0$.
\end{proof}

\begin{proof}[Proof of Theorem~\ref{thm:mixed}]
By Lemma~\ref{lem:nilpotent}, $A$ is nilpotent. Lemma~\ref{lem:annihilator} gives $A^2=0$, and Lemma~\ref{lem:E-zero} then implies $E_a=0$ for all $a$. Equations \eqref{eq:34}, \eqref{eq:45}, and \eqref{eq:50} reduce the matrices to
\[
\Dmat_i=0,\qquad \Dmat_\alpha=\begin{pmatrix}P_\alpha&0&0\\0&0&0\\0&0&0\end{pmatrix},\qquad \Dmat_a=\begin{pmatrix}*&*&0\\0&0&0\\0&0&0\end{pmatrix}.
\]
Thus $D_{\alpha k}^j=0$ for every middle first lower index $\alpha$ and all $j,k$. Also $C_{jk}^\alpha=0$ by \eqref{eq:32}. The bracket formula \eqref{eq:2} therefore shows that no Lie bracket has a real $V$-component, contrary to $\fg'=\fh\oplus V$ with $V\ne0$.
\end{proof}

\Needspace{16\baselineskip}
\begin{proof}[Proof of Theorem~\ref{thm:main}]
The abelian case is Chern flat. Otherwise, if $\fh=0$ we apply Theorem~\ref{thm:pureI}; if $W=0$, Theorem~\ref{thm:pureIII}; and if $\fh$, $V$, and $W$ are all nonzero, Theorem~\ref{thm:mixed}, since a compact quotient implies unimodularity. Hence $V=0$, or $J\fg'=\fg'$. The result of Huang--Zheng \cite{huangzheng} gives $H\equiv0$ and $R=0$.
\end{proof}
The nonexistence arguments use the lattice only through unimodularity, and therefore also give the following Lie-algebraic conclusion.

\begin{corollary}
Let $(\mathfrak g, J, g)$ be a unimodular two-step solvable Hermitian Lie algebra with constant Chern holomorphic sectional curvature. Then $J\mathfrak g' = \mathfrak g'$.
\end{corollary}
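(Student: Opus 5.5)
The corollary follows from the case analysis already carried out, with no new curvature computation. I will argue by contradiction. Suppose $(\fg,J,g)$ is unimodular, two-step solvable, has constant Chern holomorphic sectional curvature, and satisfies $J\fg'\ne\fg'$. In the Freibert--Swann decomposition \eqref{eq:1} we have $\fg'=\fh\oplus V$ with $\fh=\fg'\cap J\fg'$, so $J\fg'=\fg'$ holds exactly when $V=0$. Hence $V\ne0$. In particular $\fg'\ne0$, so $\fg$ is nonabelian. If $\fg$ were abelian, then $\fg'=0$ and the claim would hold trivially.

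With $V\ne0$ I split into three cases according to whether $\fh$ and $W$ vanish.
\begin{itemize}
\item If $\fh=0$, the algebra is of pure type I, and Theorem~\ref{thm:pureI} excludes constant holomorphic sectional curvature.
\item If $W=0$, the algebra is of pure type III, and Theorem~\ref{thm:pureIII} excludes it.
\item If $\fh\ne0$ and $W\ne0$, all three summands are nonzero, and Theorem~\ref{thm:mixed} excludes it.
\end{itemize}
These cases are exhaustive given $V\ne0$. Note that $\fh=0$ and $W=0$ may hold simultaneously; Theorem~\ref{thm:pureI} covers that overlap, since its proof handles $W=0$ separately. Each case gives a contradiction, so $V=0$ and $J\fg'=\fg'$.

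The only point to check is that none of the three theorems uses the lattice beyond unimodularity. Theorem~\ref{thm:mixed} is already stated at the Lie-algebra level. Theorems~\ref{thm:pureI} and~\ref{thm:pureIII} are stated for nonabelian unimodular two-step solvable Hermitian Lie algebras. Their proofs use only \eqref{eq:8}, through Proposition~\ref{prop:pureI-algebra}, Lemma~\ref{lem:pureIII-zero}, and \eqref{eq:27}, together with purely algebraic curvature identities. There is therefore no real obstacle. The only bookkeeping is to confirm that the trichotomy is exhaustive and that the abelian case is handled separately, as above.
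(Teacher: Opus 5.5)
Your proposal is correct and is essentially the paper's argument. The paper also obtains the corollary by rerunning the proof of Theorem~\ref{thm:main}, splitting into the cases $\fh=0$ (Theorem~\ref{thm:pureI}), $W=0$ (Theorem~\ref{thm:pureIII}) and the mixed case (Theorem~\ref{thm:mixed}), and noting that the lattice enters only through the unimodularity condition \eqref{eq:8}. Your remark that the overlap $\fh=W=0$ is already handled inside the proof of Theorem~\ref{thm:pureI} is accurate; Lemma~\ref{lem:pureIII-zero} covers it as well.
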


Theorem \ref{thm:main} concerns the original invariant metric. It makes no assertion about non-invariant metrics on the underlying complex solvmanifold, since averaging does not preserve constant holomorphic sectional curvature.

\par\vspace{0.72\baselineskip}
\Needspace{7\baselineskip}
\noindent\textbf{Acknowledgments.}\enspace We thank Professor Fangyang Zheng for his guidance and helpful discussions.\par\smallskip

\noindent\textbf{Generative AI disclosure.}\enspace
The authors used OpenAI ChatGPT as an assisting tool in the development of the paper. The authors take full responsibility for the contents of the paper.\par\smallskip

\noindent\textbf{Declaration on competing interests.}\enspace
All authors declare that there are no competing interests for this article.

\end{document}